\documentclass[useAMS]{chJS}

\usepackage[english]{babel}
\usepackage[utf8]{inputenx}
\usepackage{subfigure}
\usepackage{graphicx}
\usepackage{amssymb,amsmath,bm,bbm,booktabs}
\usepackage{units} 
\usepackage{enumerate}
\usepackage[normalem]{ulem}

\newtheorem{prop}{Proposition}

\DeclareMathOperator{\e}{\mathrm{e}}
\DeclareMathOperator{\NH}{\mathrm{NH}}
\DeclareMathOperator{\EGNH}{\mathrm{EGNH}}



\DeclareMathOperator{\E}{\mathbb{E}}

\begin{document}


  \markboth{VedoVatto et al.}{Chilean Journal of Statistics}

  \articletype{}

  \def \tr {\mathop{\rm tr}\nolimits}
  \def \etr {\mathop{\rm etr}\nolimits}
  \def \diag {\mathop{\rm diag}\nolimits}
  \def\build#1#2#3{\mathrel{\mathop{#1}\limits^{#2}_{#3}}}
  \newcommand{\Half}{\mbox{$\frac{1}{2}$}}
  \newcommand{\cuarto}{\mbox{$\frac{1}{4}$}}
  \def \R{\mathop{\rm Re}\nolimits}
  \newcommand {\findemo}{\hfill$\square$}

  \title{Some Computational and Theoretical Aspects of the Exponentiated Generalized Nadarajah-Haghighi Distribution}

  \author{VedoVatto, T.$^{\rm 1,2,\ast}$\thanks{$^\ast$Corresponding author. Email: thiagovedovatto@gmail.com
      \vspace{6pt}}, Nascimento, A.\ D.\ C.$^{\rm 2}$, Miranda Filho, W.\ R.$^{\rm 2}$, Lima, M.\ C.\ S.$^{\rm 2}$, Pinho, L.\ G.\ B.$^{\rm 3}$ and Cordeiro, G.\ M.$^{\rm 2}$\\\vspace{6pt}
    $^{\rm 1}${{Instituto Federal de Educa\c{c}\~ao, Ci\^encia e Tecnologia de Goi\'as, Goiânia, Brazil}}\\
    $^{\rm 2}${{Departamento de Estat\'istica, Universidade Federal de Pernambuco, Recife, Brazil}}
    $^{\rm 3}${{Departamento de Estat\'istica e Matemática Aplicada, Universidade Federal do Ceará, Fortaleza, Brazil}}
    \\\vspace{6pt}
    \received{Received: 00 Month 200x \sep  Accepted in final form: 00 Month 200x}
  }

  \maketitle

  \begin{abstract}
    Real data from applications in the survival context have required the use of more flexible models.
    A new four-parameter model called the Exponentiated Generalized Nadarajah-Haghighi (EGNH) distribution has been introduced in order to verify this requirement.
    We prove that its hazard rate function can be constant, decreasing, increasing, upside-down bathtub and bathtub-shape.
    Theoretical essays are provided about the EGNH shapes.
    It includes as special models the ex\-po\-nen\-tial, ex\-po\-nen\-tia\-ted ex\-po\-nen\-tial, Nadarajah-Haghighi's ex\-po\-nen\-tial and ex\-po\-nen\-tia\-ted Nadarajah-Haghighi distributions.
    We present a physical interpretation for the EGNH distribution and obtain some of its mathematical properties including shapes, moments, quantile, generating functions, mean deviations and Rényi entropy.
    We estimate its parameters by maximum likelihood, on which one of the estimates may be written in closed-form expression.
    This last result is assessed by means of a Monte Carlo simulation study.
    The usefulness of the introduced model is illustrated by means of two real data sets.
    We hope that the new distribution could be an alternative to other distributions available for modeling positive real data in many areas.
    \\
    \begin{keywords}
        Exponentiated Exponential \sep  Exponentiated generalized fa\-mi\-ly \sep Nadarajah-Haghighi's ex\-po\-nen\-tial  \sep {T-X} fa\-mi\-ly
    \end{keywords}
    \begin{classcode} 
      Primary 62E10	 \sep Secondary  62P30.
    \end{classcode}
  \end{abstract}

  \section{Introduction}

  In recent years, several ways of generating new continuous distributions have been proposed in survival analysis to provide flexibility and new forms for the \emph{hazard rate function} (hrf).
  A detailed study about `the evolution of the methods for generalizing classic distributions' was made by \cite{LeeFamoyeAlzaatreh-Methodsgeneratingfamilies-2013}.
  Most of them are special cases of the \emph{T-X} class defined by \citet{AlzaatrehLeeFamoye-newmethodgenerating-2013}.
  This class of distributions extends some recent families such as
  the beta-G pioneered by \citet{EugeneLeeFamoye-BetanormalDistribution-2002},
  the gamma-G defined by \citet{ZografosBalakrishnan-familiesbetaand-2009},
  the Kw-G fa\-mi\-ly proposed by \citet{CordeiroCastro-newfamilygeneralized-2011}
  and the Weibull-G introduced by \citet{BourguignonSilvaCordeiro-WeibullGFamily-2014}.

  The process to construct a generator based on the \emph{T-X} class is given as follows. Let $r(t)$ be the probability density function (pdf) of a random variable $T \in [a,b]$ for $-\infty<a<b<\infty$ and let $W[G(x)]$ be a mapping under the following conditions:
  \begin{itemize}
    \item $W[G(x)] \in [a,b]$;
    \item $W[G(x)]$ is monotonically non-decreasing and differentiable;
    \item $W[G(x)] \rightarrow a$ as $x \rightarrow -\infty$ and $W[G(x)] \rightarrow b$ as $x \rightarrow \infty$.
  \end{itemize}

  The new \textit{T-X} generator has \emph{cumulative distribution function} (cdf) and pdf given, respectively, by
  \begin{equation*} 
    F(x) = \int_{a}^{W[G(x)]} r(t) dt\quad\text{and}\quad  f(x) = \left\{ \frac{d}{dx}W[G(x)] \right\}r\left\{ W[G(x)] \right\}.
  \end{equation*}

  The choice of $W(\cdot)$ is intrinsically associated with the cdf of $T$.
  For example, if $T$ follows a beta distribution, $W(\cdot)$ can be the identity function, leading to the beta-G fa\-mi\-ly.
  On the other hand, if $T$ follows a gamma distribution, $W[G(x)]$ must be a function such that $W{:}[0,1]\rightarrow[0,\infty)$.
  Examples of functions that satisfy this restriction are: $W[G(x)] = -\log[1-G(x)]$, $W[G(x)] = -\log[G(x)]$ and $W[G(x)] = G(x)/[1-G(x)]$.

  \citet{CordeiroOrtegaCunha-ExponentiatedGeneralizedClass-2013} proposed an interesting fa\-mi\-ly in the \emph{T-X} class inspired by the second type Kumaraswamy distribution. 
  We refer to this fa\-mi\-ly as the \emph{ex\-po\-nen\-tia\-ted ge\-ne\-ra\-li\-zed} (EG) fa\-mi\-ly, which has cdf and pdf (for $x>0$) given, respectively, by
  \begin{equation}\label{eq:cdfgerador}
    F(x) = \{1 - [1-G(x)]^\alpha\}^\beta
  \end{equation}
  and
  \begin{equation}\label{eq:pdfgerador}
    f(x) = \frac{\alpha\beta g(x)[1-G(x)]^{\alpha -1}}{\{1 -[1 - G(x)]^\alpha\}^{1-\beta}},
  \end{equation}
  where $ \alpha >0$ and $ \beta>0 $ are two additional shape parameters that can control both tail weights.
  One can note that equations \eqref{eq:cdfgerador} and \eqref{eq:pdfgerador} do not involve any complicated function, which is an advantage when compared to the beta and gamma families. 
  Further, the EG fa\-mi\-ly is nothing more than a sequential application of the Lehmann type 2 alternative to the ex\-po\-nen\-tia\-ted class. 
    Setting $\alpha = 1$ gives the ex\-po\-nen\-tia\-ted-G (Lehmann type 1) class defined by \citet{GuptaGuptaGupta-Modelingfailuretime-1998}.

  The best benefit of the EG fa\-mi\-ly is its ability to fitting skewed data.
  This class is also dual of the Kw-G fa\-mi\-ly and has similar properties of the beta-G fa\-mi\-ly and some advantages in terms of tractability. 
  Some special models in this class have been studied recently.
  The EG-inverse Gaussian  by \citet{LemonteCordeiro-exponentiatedgeneralizedinverse-2011},
  the EG-generalized gamma by \citet{SilvaGomes-SilvaRamosCordeiro-NewExtendedGamma-2015},
  the EG-inverse Weibull   by \citet{ElbatalMuhammed-ExponentiatedGeneralizedInverse-2014}
  and the EG-Gumbel        by \citet{AndradeRodriguesBourguignonCordeiro-exponentiatedgeneralizedGumbel-2015}
  distributions are special cases obtained by taking $G(x)$ to be the cdf of the inverse Gaussian, generalized gamma, inverse Weibull and Gumbel distributions, respectively.
  Hence, each new EG distribution can be generated from a specified $G$ distribution.

  A new generalization of the ex\-po\-nen\-tial law as an alternative to the gamma and Weibull models has been proposed by \citet{NadarajahHaghighi-extensionexponentialdistribution-2011}, which model has cdf given by (for $x>0$)
  \begin{equation}\label{eq:cdfbaseline}
    G(x) = 1-\e^{1-(1+a x)^b},
  \end{equation}
  where $a>0$ and $b>0$. Its pdf reduces to
  \begin{equation}\label{eq:pdfbaseline}
    g(x) = a b (1+a x)^{b-1}\e^{1-(1+a x)^b}.
  \end{equation}

  This model is called the Nadarajah-Haghighi's ex\-po\-nen\-tial (denoted by $Z\sim\NH(a,b)$) because the ex\-po\-nen\-tial is a special case when $b=1$.
  This bi-parametric model has been used for modeling lifetime data.
  The pdf \eqref{eq:pdfbaseline} is always monotonically decreasing with $g(0)=ab$.
  \citet{NadarajahHaghighi-extensionexponentialdistribution-2011} pointed out that this distribution has an attractive feature of always having the zero mode. 
  They also showed that larger values of $b$ lead to faster decay of the upper tail.  
  Additionally, the hrf can be monotonically increasing for $b>1$, and monotonically decreasing for $b<1$.
  For $b=1$, the hrf becomes constant. 
  So, the major weakness of the NH distribution is its inability to accommodate non-monotone hrfs (i.e. bathtub-shaped and upside-down bathtub).


  Several extensions of the NH model have been proposed in recent years such as the following distributions:
  the generalized power Weibull (GPW), Kumaraswamy generalized power Weibull (KGPW),
  ex\-po\-nen\-tia\-ted (Lehmann type 1) Nadarajah-Haghighi (ENH),
  gamma (Zografos-Balakrishnan type) Nadarajah-Haghighi (GNH),
  Poisson gamma (also of Zografos-Balakrishnan type) Nadarajah-Haghighi (PGNH),
  transmuted (quadratic random transmuted map) Nadarajah-Haghighi (TNH),
  Kumaraswamy Nadarajah-Haghighi (KNH),
  ex\-po\-nen\-tia\-ted (Lehmann type 2) Nadarajah-Haghighi (E2NH) (submodel of the KNH),
  modified Nadarajah-Haghighi (MNH),
  Marshall-Olkin Nadarajah-Haghighi (MONH)
  and beta Nadarajah-Haghighi (BNH). 
  These distributions and their corres\-pon\-ding authors are listed in
  Table \ref{table:RecentNadarajah-Haghighiextensions}.

  \begin{table}
    \caption{Nadarajah-Haghighi extensions}
    \centering
    \begin{tabular}{l|l}
    	\hline
    	Distribution & Author(s)                                                                                                                                  \\ \hline
    	GPW          & \citet{NikulinHaghighi-ChiSquaredTest-2006,NikulinHaghighi-powergeneralizedWeibull-2009}                                                   \\
    	ENH          & \citet{Lemonte-newexponentialtype-2013} and \citet{Abdul-Moniem-ExponentiatedNadarajahand-2015}                                            \\
    	GNH          & \citet{OrtegaLemonteSilvaCordeiro-Newflexiblemodels-2015} and \citet{BourguignonCarmoLeaoNascimentoPinhoCordeiro-newgeneralizedgamma-2015} \\
    	PGNH         & \citet{OrtegaLemonteSilvaCordeiro-Newflexiblemodels-2015}                                                                                  \\
    	TNH          & \citet{AhmedMuhammedElbatal-NewClassExtension-2015}                                                                                        \\
    	KNH          & \citet{Lima-TheHalf-normalGeneralizedFamilyAndKumaraswamyNadarajah-haghighiDistribution-2015}                                              \\
    	MNH          & \citet{El-DamceseRamadan-StudiesPropertiesand-2015}                                                                                        \\
    	MONH         & \citet{LemonteCordeiroMoreno-Arenas-newusefulthree-2016}                                                                                   \\
    	KGPW         & \citet{SelimBadr-KumaraswamyGeneralizedPower-2016}                                                                                         \\
    	BNH          & \citet{Dias-NewContinuousDistributionsAppliedToLifetimeDataAndSurvivalAnalysis-UniversidadeFederaldePernambuco-2016}                       \\ \hline
    \end{tabular}
    \label{table:RecentNadarajah-Haghighiextensions}
  \end{table}

  In this paper, we obtain a natural extension of the NH distribution, which we refer to as the \emph{exponentiated generalized Nadarajah-Haghighi} (EGNH) distribution. 
  The first motivation for introducing the new distribution is based on the fact that it has simple expressions for the pdf and hrf and, as a consequence, a simpler statistical inference process.
  Second, the flexibility of the new distribution to describe complex positive real data is concluded since this hrf can present constant, decreasing, increasing, unimodal, bathtub-shaped and upside-down bathtub forms.
  Due to the great flexibility of its hrf, it can provide a good alternative to many existing lifetime distributions to model real data. 
  Therefore, the beauty and importance of the new distribution lies in its ability to model monotone as well as nonmonotone hrfs, which are quite common in reliability and biological studies.

  So, our main aim is to propose a new four-parameter distribution, which extends the ex\-po\-nen\-tial, NH and ENH distributions, with the hope that it may provide a `better fit' compared to other lifetime distributions in certain practical situations.
  Additionally, we provide a comprehensive account of the mathematical properties of the introduced distribution.
  The formulae related with the new distribution are simple and manageable, and, with the use of modern computer resources and their numerical capabilities, the proposed distribution may prove to be an useful addition to the arsenal of applied statisticians in areas like biology, medicine, economics, reliability, engineering, among others.

  The paper is outlined as follows.
  In Section \ref{sec:Theproposedmodel}, we introduce the new distribution, some of its basic properties and provide plots of the pdf and hrf.
  Section \ref{sec:QuantileMeasures} deals with nonstandard measures of skewness and kurtosis.
  Linear representations for the pdf and cdf are presented in Section \ref{sec:DensityExpansions} and explicit expressions for the moments are provided in Section \ref{sec:Moments}.
  Section \ref{sec:GeneratingFunction} is devoted to the \emph{moment generating function} (mgf).
  The mean deviations are obtained in Section \ref{sec:MeanDeviations}.
  The Rényi entropy is derived in Section \ref{sec:RenyiEntropy}.
  In Section \ref{sec:OrderStatistics}, we present the order statistics.
  A procedure to calculate the \emph{maximum likelihood estimators} (MLEs) of the model parameters is investigated in Section \ref{sec:MLE}.
  A simulation study illustrating the convergence and consistency of the MLEs is addressed in Section \ref{sec:SimulationStudies}.
  Two applications to real data are provided in Section \ref{sec:Applications}.
  Finally, Section \ref{sec:Concludingremarks} offers some concluding remarks.


  \section{The proposed model}\label{sec:Theproposedmodel}

  The EGNH cdf is obtained by applying \eqref{eq:cdfbaseline} in Equation \eqref{eq:cdfgerador}
  \begin{equation}\label{eq:cdfmodelo}
    F(x) = \left\{1-\left[\e^{1-(1+a x)^b}\right]^{\alpha }\right\}^{\beta },\quad x>0,
  \end{equation}
  where $\alpha,\beta,a,b>0$ and $b\ne 1$.
  The EGNH pdf can be obtained by applying \eqref{eq:cdfbaseline} and \eqref{eq:pdfbaseline} in \eqref{eq:pdfgerador} or by differentiating \eqref{eq:cdfmodelo}:
  \begin{equation}\label{eq:pdfmodelo}
    f(x)= \frac{a \alpha  b \beta  (1+a x)^{b-1} \left[\e^{1-(1+a x)^b}\right]^{\alpha }}{\left\{1-\left[\e^{1-(1+a x)^b}\right]^{\alpha }\right\}^{1-\beta}}.
  \end{equation}

  Henceforth, we denote by $X\sim\EGNH(\alpha,\beta,a,b)$ a random variable having pdf \eqref{eq:pdfmodelo}.
  So, the EGNH distribution is obtained by adding two shape parameters $\alpha$ and $\beta$ to the NH distribution.
  It is necessary to consider $b\ne 1$, an additional restriction to the parametric space, to give identifiability to the generated model.

  The EGNH distribution has also an attractive physical interpretation whenever $\alpha$ and $\beta$ are positive integers.
  Consider a device based on $\beta$ independent components in a parallel system.
  Further, each component is defined in terms of $\alpha$ independent subcomponents identically distributed following the NH model in a series system.
  Assume that the device fails if all $\beta$ components fail and each component fails if any subcomponent fails.
  Let $X_{j1},\ldots,X_{j\alpha}$ denote the lifetimes of the subcomponents within the $j$th component, $j=1,\ldots,\beta$, with common cdf given by \eqref{eq:cdfbaseline}.
  Let $X_j$ denote the lifetime of the $j$th component and let $X$ denote the lifetime of the device.
  Thus, the cdf of $X$ is given by
  \begin{align*}\label{eq:PhysicalInterpretation}
    P(X\le x)=& P(X_1\le x,\ldots,X_\beta\le x)=P(X_1\le x)^\beta=\left[1-P(X_1>x)\right]^\beta\\
    =& \left[1-P(X_{11}>x,\ldots,X_{1\alpha}>x)\right]^\beta=\left[1-P(X_{11}>x)^\alpha\right]^\beta\\
    =& \left\{1-\left[1-P(X_{11}\le x)\right]^\alpha\right\}^\beta.
  \end{align*}
  So, the lifetime of the device obeys the EGNH distribution.

  The quantile function (qf) of $X$ is determined by inverting \eqref{eq:cdfmodelo} (for $0<p<1$) as
  \begin{equation}\label{eq:quantile function}
    Q(p)=\frac{1}{a}\left\{\left\{1-\log\left[\left(1-p^{\frac{1}{\beta}}\right)^{\frac{1}{\alpha}}\right]\right\}^{\frac{1}{b}}-1\right\}.
  \end{equation}
  Clearly, the median of the new distribution follows by setting $p=1/2$ in \eqref{eq:quantile function}. We can also use \eqref{eq:quantile function} for simulating EGNH random variables by the \emph{inverse transform sampling method}, which works as follows:
  \begin{enumerate}
    \item Let $u$ be an outcome of uniform distribution in the interval $[0,1]$.
    \item $x=Q(u)$ is them an outcome of the EGNH distribution.
  \end{enumerate}




  Finally, the \emph{survival function} (sf) and hrf of $X$ are given, respectively, by
  \begin{align}
    S(x) & = 1-\left\{1-\left[\e^{1-(1+a x)^b}\right]^{\alpha }\right\}^{\beta }\,\,\,\text{and}\nonumber\\
    h(x) & = \frac{a \alpha  b \beta  (1+a x)^{b-1} \left[\e^{1-(1+a x)^b}\right]^{\alpha } }{1-\left\{1-\left[\e^{1-(1+a x)^b}\right]^{\alpha }\right\}^{\beta }\left\{1-\left[\e^{1-(1+a x)^b}\right]^{\alpha }\right\}^{1-\beta}}\label{eq:hazard rate}.
  \end{align}

  The reverse hrf of $X$ is given by
  \begin{equation*}\label{eq:reverhaf}
    r(x)= \frac{a\alpha b\beta(1+ax)^{b-1}\left[\e^{1-(1+ax)^b}\right]^\alpha}{\left\{1-\left[\e^{1-(1+ax)^b}\right]^\alpha\right\}^\beta}= \frac{\beta}{\left[F_{E2NH}(x)\right]^\beta}\,f_{E2NH}(x)
  \end{equation*}
  where $F_{E2NH}(x)$ and $f_{E2NH}(x)$ are, respectively, the cdf and pdf of E2NH (Lehmann type 2) distribution, which is a special case of the model proposed by \citet{Lima-TheHalf-normalGeneralizedFamilyAndKumaraswamyNadarajah-haghighiDistribution-2015} and \cite{SelimBadr-KumaraswamyGeneralizedPower-2016}. In the following, we address limit distributions, shape and special cases of the EGNH model.

  \subsection{Limiting behavior of the density}
  The pdf \eqref{eq:pdfmodelo} can take various forms depending on the values of the $\alpha$ and $\beta$ shape parameters.
  It is easy to verify that
  \begin{align*}\label{eq:LimitingBehaviorToZero}
    \lim_{x\to 0}f(x)=
    \begin{cases}
      \infty,   & \beta<1 \\
      ab\alpha, & \beta=1 \\
      0,& \beta>1
    \end{cases} \quad\text{and}\quad \lim_{x\to\infty}f(x)=0.
  \end{align*}

  \subsection{Shapes}

  In Figures \ref{fig:density} and \ref{fig:hazard}, we plot some possible shapes of the pdf \eqref{eq:pdfmodelo} and hrf \eqref{eq:hazard rate}, respectively, for selected parameter values.
  The pdf and hrf of $X$ can take various forms depending on the parameter values.
  It is evident that the EGNH distribution is much more flexible than the $\NH$ distribution. 
  So, the new model can be very useful in many practical situations for modeling positive real data.

  \begin{figure}[htb!]
    \centering
    \includegraphics[width=\linewidth]{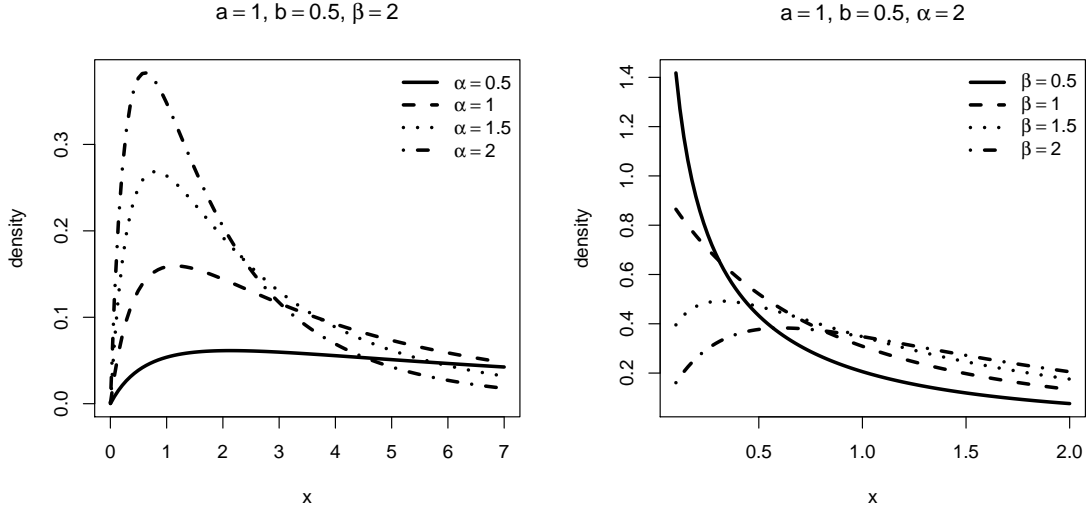}
    \caption{Plots of the pdf \eqref{eq:pdfmodelo} for some parameter values.}
    \label{fig:density}
  \end{figure}

  \begin{figure}[htb!]
    \centering
    \includegraphics[width=\linewidth]{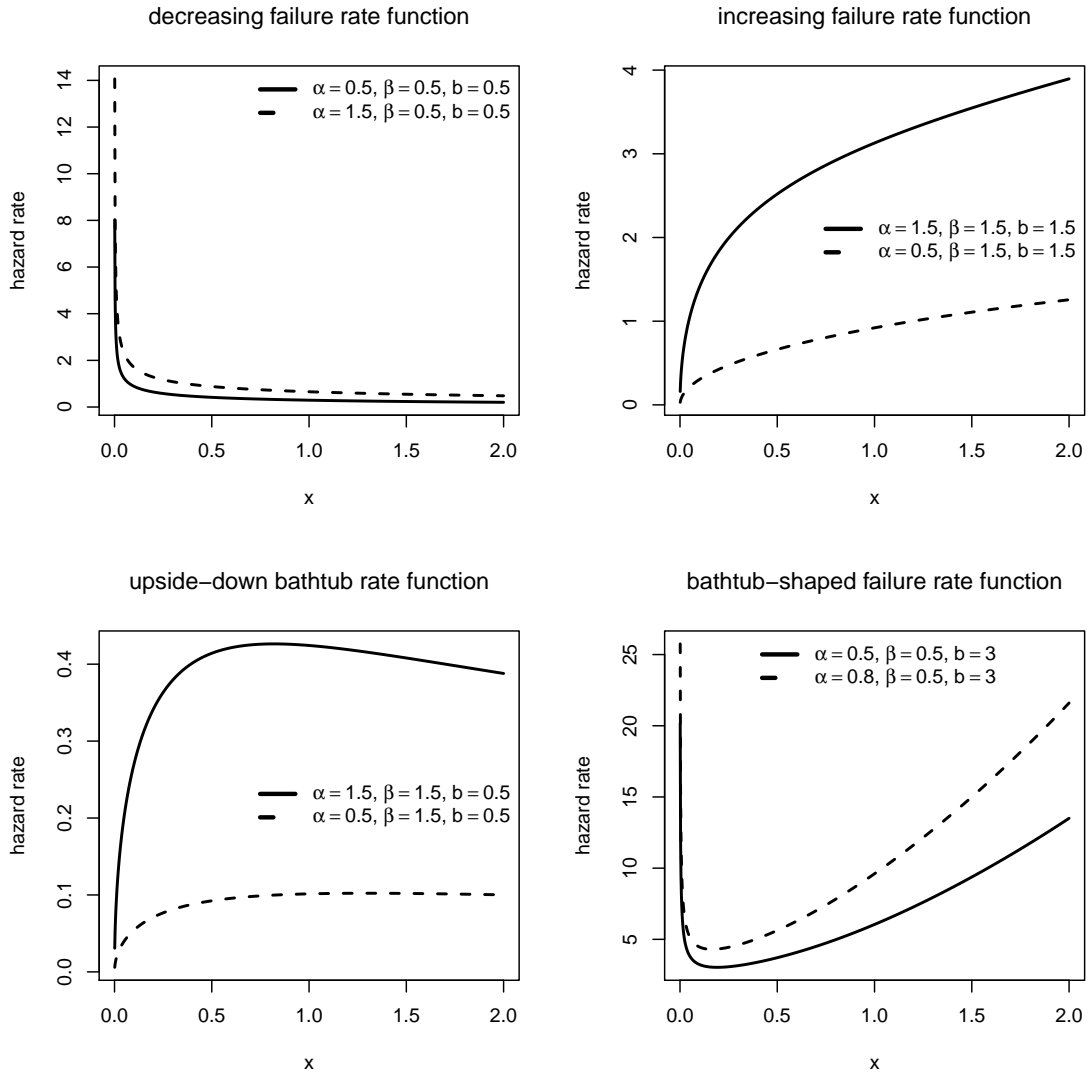}
    \caption{Plots of the hrf \eqref{eq:hazard rate} for some parameter values and $a=1$}
    \label{fig:hazard}
  \end{figure}

  It is easy to prove that
  \begin{equation*}\label{eq:LogDensity1d}
    \frac{d \log\left[f(x)\right]}{dx} = \frac{a \alpha  b ( \beta -1 )  (1+a x)^{b-1} \left(\e^{1-(1+a x)^b}\right)^{\alpha }}{1-\left(\e^{1-(1+a x)^b}\right)^{\alpha }}-a \alpha  b (1+a x)^{b-1}+\frac{a (b-1)}{1+a x}.
  \end{equation*}

  The mode of the pdf \eqref{eq:pdfmodelo} is the solution of the equation
  \begin{equation*}\label{eq:ModesEquation}
    \frac{d\log[f(x)]}{dx}=0.
  \end{equation*}

  \begin{prop}\label{prop:pdflogconvex}
    For any $\alpha>0$ and $a>0$, the EGNH pdf is log-convex if $\beta<1$ and $b<1$, and it is log-concave if $\beta>1$ and $b>1$.
  \end{prop}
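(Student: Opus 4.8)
The plan is to prove the two statements by showing that $\frac{d}{dx}\log f(x)$ is nondecreasing under the first set of hypotheses and nonincreasing under the second, since these are exactly the conditions $\frac{d^2}{dx^2}\log f(x)\ge 0$ and $\le 0$ that characterize log-convexity and log-concavity. I would start from the expression for $\frac{d\log f(x)}{dx}$ already displayed above and split it into its three natural summands $T_1=\frac{a\alpha b(\beta-1)(1+ax)^{b-1}v}{1-v}$, $T_2=-a\alpha b(1+ax)^{b-1}$ and $T_3=\frac{a(b-1)}{1+ax}$, where I abbreviate $v=\e^{\alpha(1-(1+ax)^b)}\in(0,1)$ for $x>0$. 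Rather than differentiate the whole sum, I would argue that each $T_i$ is monotone in the direction dictated by the hypotheses, so that the monotonicity of their sum, and hence the sign of $\frac{d^2}{dx^2}\log f(x)$, follows at once.

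The two power-law terms are immediate: differentiating gives $T_2'=-a^2\alpha b(b-1)(1+ax)^{b-2}$ and $T_3'=-a^2(b-1)(1+ax)^{-2}$, both having the sign of $-(b-1)$. Hence $T_2$ and $T_3$ are increasing when $b<1$ and decreasing when $b>1$, matching the desired conclusion in each case. The remaining term factors as $T_1=a\alpha b(\beta-1)\,g_1(x)$ with $g_1(x)=(1+ax)^{b-1}\frac{v}{1-v}>0$, so the monotonicity of $T_1$ is governed jointly by the sign of $\beta-1$ and by the monotonicity of $g_1$: if $g_1$ is decreasing, then $T_1$ is increasing when $\beta<1$ and decreasing when $\beta>1$, again matching both target cases.

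The crux is therefore the claim that $g_1$ is decreasing on $(0,\infty)$ for all $\alpha,a,b>0$. I would verify this through the substitution $w=(1+ax)^b$, which increases from $1$ to $\infty$ as $x$ ranges over $(0,\infty)$, turning $g_1$ into $\phi(w)=w^{1-1/b}/(\e^{\alpha(w-1)}-1)$; it then suffices to show $\phi$ is decreasing in $w$. Taking logarithmic derivatives yields $\frac{\phi'(w)}{\phi(w)}=\frac{1-1/b}{w}-\frac{\alpha\e^{\alpha(w-1)}}{\e^{\alpha(w-1)}-1}$, which must be shown to be negative. When $b<1$ this is trivial, since $1-1/b<0$ renders both summands negative. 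The genuinely delicate case is $b>1$, where $1-1/b\in(0,1)$ makes the first summand positive, so it must be dominated by the second; I would bound $\frac{1-1/b}{w}<\frac1w$ and thereby reduce the inequality to $1-\e^{-\alpha(w-1)}\le\alpha w$, which follows from the elementary estimate $1-\e^{-t}\le t$ applied with $t=\alpha(w-1)<\alpha w$. Establishing this monotonicity of $g_1$ is the main obstacle; once it is in hand, assembling the three sign statements completes both cases, giving $\frac{d}{dx}\log f(x)$ increasing when $\beta<1,\,b<1$ and decreasing when $\beta>1,\,b>1$.
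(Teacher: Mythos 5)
Your proof is correct, and it takes a genuinely different---and in one respect stronger---route than the paper's. The paper substitutes $z=(1+ax)^b$, rewrites the pdf as a function $\phi(z)$, and computes in closed form
\begin{equation*}
\frac{d^2 \log[\phi(z)]}{dz^2}=-\left\{\frac{b-1}{bz^2}+\frac{1}{4}\,\alpha^2(\beta-1)\,\mathrm{csch}^2\left[\frac{1}{2}(\alpha-\alpha z)\right]\right\},
\end{equation*}
whose sign is immediate under either hypothesis. That is slicker than your argument, but it establishes curvature of $\log f$ in the variable $z$, and since $x\mapsto z$ is nonlinear whenever $b\neq 1$, convexity statements do not automatically transfer back to $x$: writing $\log f(x)=\psi(z(x))$, the chain rule gives $\frac{d^2}{dx^2}\log f=\psi''(z)\,z'(x)^2+\psi'(z)\,z''(x)$, and the cross term $\psi'(z)\,z''(x)$ has no fixed sign, so the paper's computation requires an additional argument to yield the proposition as stated in $x$. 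Your approach is immune to this issue: you verify monotonicity of $\frac{d}{dx}\log f(x)$ term by term in the original variable, and you invoke the substitution $w=(1+ax)^b$ only inside the lemma that $g_1(x)=(1+ax)^{b-1}v/(1-v)$ is decreasing---a property that, unlike convexity, \emph{is} preserved under increasing reparametrization, so the transfer between $x$ and $w$ is legitimate there. The decomposition into $T_1,T_2,T_3$, the sign computations for $T_2',T_3'$, and the reduction of the delicate case $b>1$ to $1-\e^{-\alpha(w-1)}\le\alpha(w-1)<\alpha w$ via the elementary bound $1-\e^{-t}\le t$ all check out. In short: the paper's proof buys brevity through a closed-form second derivative in a transformed variable; yours is longer and needs the key monotonicity lemma for $g_1$, but it is more elementary (no hyperbolic identities) and, as written, more complete.
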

  \begin{proof}
    The proof is analogous to that one given by \citet{Lemonte-newexponentialtype-2013}.
    Let $z=(1+ax)^b$.
    Then, $x>0$ implies $z>1$.
    Inverting for $x$, we obtain $x=(z^{1/b}-1)/a$.
    The EGNH pdf rewritten as a function of $z$ is given by
    \begin{equation}\label{eq:newpdfmodelo}
      \phi(z)=\frac{a \alpha  b \beta  z^{\frac{b-1}{b}} \e^{\alpha } \left(1-\e^{\alpha -\alpha  z}\right)^{\beta }}{\e^{\alpha  z}-\e^{\alpha }},\quad z>1.
    \end{equation}

    The result follows by nothing that the second derivative of $\log[\phi(z)]$ is
    \begin{equation*}\label{eq:newpdfmodelosecondderivative}
      \frac{d^2 \log[\phi(z)]}{dz^2}=-\left\{\frac{b-1}{bz^2}+\frac{1}{4} \alpha ^2 (\beta -1) \text{csch}^2\left[\frac{1}{2} (\alpha -\alpha  z)\right]\right\}.
    \end{equation*}
  \end{proof}

  \begin{prop}\label{prop:hrfshapes}
    For any $\alpha>0$ and $a>0$, the EGNH model has an increasing hrf if $\beta>1$ and $b>1$ and it has a decreasing hrf if $\beta<1$ and $b<1$. 
    It is constant if $\beta=b=1$.
  \end{prop}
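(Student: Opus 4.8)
The plan is to reduce the claim to the monotonicity of the hazard rate of a simpler, transformed variable, so that the effect of $\beta$ and the effect of $b$ decouple cleanly. Since $a,b>0$, the map $g(x)=(1+ax)^{b}$ is a strictly increasing differentiable bijection from $(0,\infty)$ onto $(1,\infty)$, so I would set $Y=g(X)=(1+aX)^{b}$ and work with its law. A change of variables in \eqref{eq:pdfmodelo} (equivalently, reading $\phi$ off \eqref{eq:newpdfmodelo} and dividing by the Jacobian $g'(x)=ab\,z^{(b-1)/b}$ with $z=g(x)$) gives
\begin{equation*}
  f_Y(z)=\alpha\beta\,\e^{\alpha(1-z)}\bigl[1-\e^{\alpha(1-z)}\bigr]^{\beta-1},\qquad z>1,
\end{equation*}
so that $Y-1$ follows a generalized (exponentiated) exponential law with shape $\beta$ and rate $\alpha$. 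Because $g$ is increasing, $S_X(x)=S_Y(z)$ and $f(x)=g'(x)f_Y(z)$, whence
\begin{equation*}
  h(x)=g'(x)\,h_Y(z)=ab\,z^{(b-1)/b}\,h_Y(z),\qquad z=(1+ax)^{b}.
\end{equation*}

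The two shape effects now separate. For the $\beta$-factor I would show that $f_Y$ is log-concave in $z$ when $\beta>1$ and log-convex when $\beta<1$: a direct computation yields $\tfrac{d^2}{dz^2}\log f_Y(z)=-(\beta-1)\,\alpha^{2}\e^{\alpha(1-z)}/[1-\e^{\alpha(1-z)}]^{2}$, which is precisely the $\beta$-term isolated in Proposition \ref{prop:pdflogconvex}. By the classical log-concavity criterion (Glaser's theorem), a log-concave density has increasing hazard rate and a log-convex density has decreasing hazard rate, so $h_Y(z)$ is increasing for $\beta>1$ and decreasing for $\beta<1$. For the $b$-factor, the Jacobian $z^{(b-1)/b}$ is increasing in $z$ when $b>1$ and decreasing when $b<1$, since its $z$-derivative has the sign of $b-1$. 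As $z=(1+ax)^{b}$ is increasing in $x$, it remains to combine the two monotone positive factors: if $\beta>1$ and $b>1$ both $z^{(b-1)/b}$ and $h_Y(z)$ increase with $x$, hence $h$ increases; if $\beta<1$ and $b<1$ both decrease, hence $h$ decreases.

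For the boundary case $\beta=b=1$ I would simply substitute these values into \eqref{eq:cdfmodelo}, obtaining $F(x)=1-\e^{-a\alpha x}$, the exponential cdf with rate $a\alpha$; therefore $h(x)\equiv a\alpha$ is constant. The same conclusion follows by direct simplification of the hazard rate in \eqref{eq:hazard rate}.

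The main obstacle, and the reason I route the argument through $Y$ rather than applying the log-concavity criterion to $f$ directly, is that Proposition \ref{prop:pdflogconvex} controls $\tfrac{d^2}{dz^2}\log\phi$ with respect to $z$, not the sign of $(\log f)''$ in $x$; the latter also carries the indefinite-sign term $(\log\phi)'(z)\,g''(x)$ and is not sign-definite on its own. Isolating the clean $\beta$-dependence in $f_Y$ and absorbing the $b$-dependence into the explicit monotone Jacobian $z^{(b-1)/b}$ circumvents this difficulty and keeps each factor manifestly monotone.
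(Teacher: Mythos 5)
Your proof is correct, and it takes a genuinely different route from the paper's, even though both start from the same substitution $z=(1+ax)^{b}$. In fact, your factorization $h(x)=ab\,z^{(b-1)/b}\,h_Y(z)$ is algebraically the same object as the paper's transformed hazard $\eta(z)$ in \eqref{eq:newhrfmodelo}: using $1-\e^{\alpha(z-1)}=-\e^{\alpha(z-1)}\bigl[1-\e^{\alpha(1-z)}\bigr]$, the paper's $\eta(z)$ collapses exactly to $ab\,z^{(b-1)/b}\,f_Y(z)/\bigl\{1-[1-\e^{\alpha(1-z)}]^{\beta}\bigr\}$, i.e.\ to your product of the Jacobian with the hazard of the exponentiated exponential variable $Y-1$. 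The difference lies in what is done with this object. The paper differentiates $\log\eta(z)$ and then simply asserts that the resulting expression $\xi(z)$ is positive when $\beta>1,\,b>1$ and negative when $\beta<1,\,b<1$; that sign claim --- specifically for the term $\alpha\{\,[\beta+(1-\e^{\alpha(1-z)})^{\beta}-1]/([1-\e^{\alpha(z-1)}][(1-\e^{\alpha(1-z)})^{\beta}-1])-1\}$ --- is the crux of the argument and is left unverified. Your proof replaces that unproved inequality with two transparent steps: the Jacobian factor $z^{(b-1)/b}$ is monotone with the sign of $b-1$ by inspection, and $h_Y$ is monotone with the sign of $\beta-1$ by the classical log-concavity/log-convexity (Glaser) criterion applied to $f_Y$, whose second logarithmic derivative $-(\beta-1)\alpha^{2}\e^{\alpha(1-z)}/[1-\e^{\alpha(1-z)}]^{2}$ has an evident sign. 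So your argument actually supplies the justification the paper skips, at the cost of invoking the hazard-transformation identity under monotone maps and the Glaser criterion (both standard; note the log-convex-implies-DFR half does need the support to be a right-unbounded interval, which holds here since $Y$ lives on $(1,\infty)$). A further benefit of your route is conceptual: it identifies $Y-1$ as a Gupta--Kundu exponentiated exponential variable, explains why the same $\beta$-term appears in the proof of Proposition \ref{prop:pdflogconvex}, and makes precise your closing observation --- which is correct --- that Proposition \ref{prop:pdflogconvex} alone cannot be fed into Glaser's theorem in the $x$ variable, because log-concavity there is established in $z$, not in $x$.
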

  \begin{proof}
    First, note that $\beta=b=1$ implies $h(x)=a\alpha$ (constant hrf).

    Next, let $z=(1+ax)^b$. 
    Notice that $z>1$ (for $x>0$) and $x=\nicefrac{(z^{1/b}-1)}{a}$.
    Rewriting the EGNH hrf as a function of $z$, we have 
    \begin{equation}\label{eq:newhrfmodelo}
      \eta(z)= \frac{a \alpha  b \beta z^{\frac{b-1}{b}} \left[1-\e^{\alpha(1-z)}\right]^{\beta }}{\left[1-\e^{\alpha(z-1)}\right] \left\{\left[1-\e^{\alpha(1-z)}\right]^{\beta }-1\right\}},\quad z>1.
    \end{equation}

    Next, taking the first derivative of the logarithm of both sides of equation \eqref{eq:newhrfmodelo}, we obtain
    \begin{equation*}\label{newhrfmodelofirstderivative}
      \frac{d \log[\eta(z)]}{dz}= \frac{\eta'(z)}{\eta(z)}=\frac{b-1}{b z}+\alpha  \left\{\frac{\beta +\left[1-\e^{\alpha(1-z)}\right]^{\beta }-1}{\left[1-\e^{\alpha(z-1)}\right] \left\{\left[1-\e^{\alpha(1-z)}\right]^{\beta }-1\right\}}-1\right\},
    \end{equation*}
    which implies that
    \begin{equation*}
      \eta'(z)=\eta(z)\left\{ \frac{b-1}{bz}+\alpha\left\{\frac{\beta +\left[1-\e^{\alpha(1-z)}\right]^{\beta }-1}{\left[1-\e^{\alpha(z-1)}\right] \left\{\left[1-\e^{\alpha(1-z)}\right]^{\beta }-1\right\}}-1\right\}\right\}.
    \end{equation*}
    Thus, $\eta'(z)$ has the same sign of
    \begin{equation*}\label{eq:auxiliarfunction}
      \xi(z)=\frac{b-1}{bz}+\alpha\left\{\frac{\beta +\left[1-\e^{\alpha(1-z)}\right]^{\beta }-1}{\left[1-\e^{\alpha(z-1)}\right] \left\{\left[1-\e^{\alpha(1-z)}\right]^{\beta }-1\right\}}-1\right\},\quad z>1,
    \end{equation*}
    since $\eta(z)>0$ for $z>1$. 
    Note that $\xi(z)<0$ if $b<1$ and $\beta<1$ and $\xi(z)>0$ if $b>1$ and $\beta>1$.

  \end{proof}
  
  \noindent The last proposition is important because it anticipates behaviors of device lifetime on practical situations.

  \subsection{Special distributions}

  In Table \ref{table:Special Distributions}, one can observe some EGNH special models.
  As expected, the ex\-po\-nen\-tial distribution is obtained as a special case, but surprisingly by two distinct ways. 
  The ex\-po\-nen\-tia\-ted ex\-po\-nen\-tial (EE) is also obtained by two different settings.
  Any baseline distribution will also be a sub-model when all the additional parameters of the EG fa\-mi\-ly are equal to one, and then the NH distribution is listed. 
  Finally, the ENH (Lehmann type 1) and E2NH (Lehmann type 2) models are examples of distributions discussed early, which are really special cases of the EGNH model.

  \begin{table}
    \caption{Some special distributions}
    \centering
    \begin{tabular}{l|c|c|c|c}
      \hline
      Model        & $ \alpha $ & $ \beta $ & $ a $ & $ b $ \\ \hline
      ENH          &     1      &     -     &   -   &   -   \\
      E2NH         &     -      &     1     &   -   &   -   \\
      $\NH$        &     1      &     1     &   -   &   -   \\
      EE           &     1      &     -     &   -   &   1   \\
      EE           &     -      &     -     &   1   &   1   \\
      exponential  &     1      &     1     &   -   &   1   \\
      ex\-po\-nen\-tial  &     -      &     1     &   1   &   1   \\ \hline
    \end{tabular}
    \label{table:Special Distributions}
  \end{table}

 \section{Quantile measures} \label{sec:QuantileMeasures}
 
 The effects of the parameters $\alpha$ and $\beta$ on the skewness and kurtosis of $X$ can be considered based on the qf given by \eqref{eq:quantile function}. 
 The Bowley skewness (B), proposed by \citet{KenneyKeeping-Mathematicsofstatistics-1962}, and the Moors kurtosis (M), proposed by \citet{Moors-QuantileAlternativeKurtosis-1988}, are defined, respectively, by
 \begin{equation*}\label{eq:BowleySkewness}
 B= \frac{Q( \nicefrac{3}{4})-2Q( \nicefrac{1}{2})+Q( \nicefrac{1}{4})}{Q(\nicefrac{3}{4})-Q(\nicefrac{1}{4})}
 \quad\text{and}\quad
 M= \frac{Q( \nicefrac{7}{8})-Q( \nicefrac{5}{8})+ Q( \nicefrac{3}{8}) - Q( \nicefrac{1}{8})}{Q( \nicefrac{3}{4})-Q(\nicefrac{1}{4})}.
 \end{equation*}
 The $B$ and $M$ measures are less sensitive to outliers and they exist even for distributions without moments.
 
 In Figures \ref{fig:BowleySkewness} and \ref{fig:MoorsKurtosis}, we plot the measures $B$ and $M$ for some parameter values of the EGNH distribution as functions of $\alpha$ and $\beta$ (for $a$ and $b$ fixed).
 These plots indicate that both measures can be very sensitive on these shape parameters and reveal the importance of the proposed distribution.
 
 \begin{figure*}[htb!] 
 	\centering
 	\subfigure[$a=0.5,b=2.0$]{
 		\includegraphics[width=0.45\textwidth]{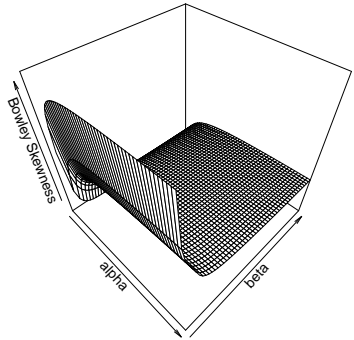}
 		\label{fig:BowleySkewness1}
 	}
 	\subfigure[$a=0.5,b=0.5$]{
 		\includegraphics[width=0.45\textwidth]{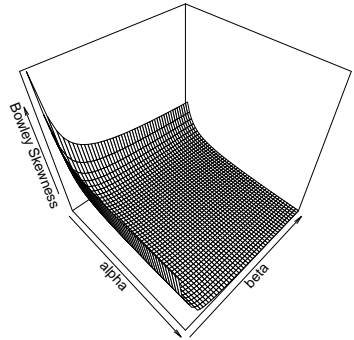}
 		\label{fig:BowleySkewness2}
 	}
 	\caption{The Bowley skewness of the EGNH distribution as function of $\alpha$ and $\beta$.}
 	\label{fig:BowleySkewness}
 \end{figure*}
 
 \begin{figure*}[htb!] 
 	\centering
 	\subfigure[$a=0.5,b=2.0$]{
 		\includegraphics[width=0.45\textwidth]{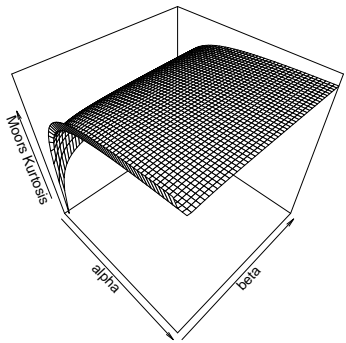}
 		\label{fig:MoorsKurtosis1}
 	}
 	\subfigure[$a=0.5,b=0.5$]{
 		\includegraphics[width=0.45\textwidth]{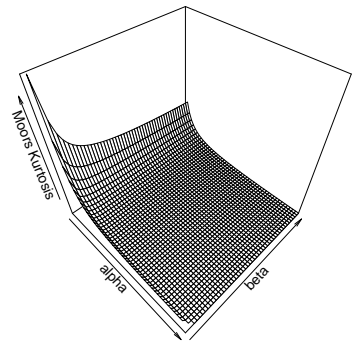}
 		\label{fig:MoorsKurtosis2}
 	}
 	\caption{The Moors kurtosis of the EGNH distribution as function of $\alpha$ and $\beta$.}
 	\label{fig:MoorsKurtosis}
 \end{figure*}
 
 
  \section{Linear Representations} \label{sec:DensityExpansions}
  Some useful linear representations for \eqref{eq:cdfmodelo} and \eqref{eq:pdfmodelo} can be derived using the concept of ex\-po\-nen\-tia\-ted distributions.
	The properties of these distributions have been studied by many authors in recent years, see \citet{MudholkarSrivastava-ExponentiatedWeibullfamily-1993} and \citet{MudholkarSrivastavaFreimer-ExponentiatedWeibullFamily-1995} for exponentiated Weibull, \citet{GuptaGuptaGupta-Modelingfailuretime-1998} and \citet{GuptaKundu-ExponentiatedExponentialFamily-2001} for exponentiated exponential, \citet{NadarajahGupta-productParetodistribution-2007} for exponentiated gamma and \citet{CordeiroOrtegaSilva-exponentiatedgeneralizedgamma-2011} for exponentiated generalized gamma distributions.
  For an arbitrary baseline cdf $G(x)$, a random variable is said to have the ex\-po\-nen\-tia\-ted-G (``exp-G'' for short) distribution with power parameter $a>0$, say $Y\sim\operatorname{exp-G}(a)$, if its cdf and pdf are $H_a(x)=G(x)^a$ and $h_a(x)=ag(x)G(x)^{a-1}$, respectively.
  For any real non-integer $\beta$, we also consider the power series
  \begin{equation}\label{eq:PowerSeriesExpansion}
    (1-z)^{\beta}=\sum_{i=0}^{\infty}(-1)^i\binom{\beta}{i}z^i,
  \end{equation}
  which holds for $|z|<1$.

  Using expansion \eqref{eq:PowerSeriesExpansion} in equation \eqref{eq:cdfmodelo}, we can express the EGNH cdf as
  \begin{equation*}\label{eq:cdfExpansion-step1}
    F(x)=\sum_{i=0}^{\infty} (-1)^i\,\binom{\beta}{i}\left\{1-\left[1-\e^{1-(1+ax)^b}\right]\right\}^{i\alpha}
  \end{equation*}
  and applying again \eqref{eq:PowerSeriesExpansion} in the equation above gives
  \begin{equation*}\label{eq:cdfExpansion-step2}
    F(x)=\sum_{i,j=0}^{\infty} (-1)^{i+j} \binom{\beta}{i} \binom{i\alpha}{j}\left[1-\e^{1-(1+ax)^b}\right]^j.
  \end{equation*}

  Then, the following proposition holds. 
  
  \begin{prop}
	  The EGNH cdf can be rewritten as
	  \begin{equation}\label{eq:cdfExpansion}
	    F(x)=\sum_{j=0}^{\infty}w_j\,H_j(x),
	  \end{equation}
	  where the coefficients (for $j \ge 0$) are given by
	  \begin{equation*}\label{eq:cdfCoefficientsExpansion}
	    w_j=w_j(\alpha,\beta)=\sum_{i=0}^{\infty}(-1)^{i+j}\binom{\beta}{i}\binom{i\alpha}{j}
	  \end{equation*}
	  and $H_j(x)=\{1-\exp[1-(1+ax)^b]\}^j$ is the ENH$(j,a,b)$ cdf (for $j \ge 0$).
  \end{prop}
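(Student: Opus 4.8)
The plan is to start from the double power-series representation of the EGNH cdf already established immediately before the proposition, namely
\begin{equation*}
  F(x)=\sum_{i,j=0}^{\infty} (-1)^{i+j} \binom{\beta}{i} \binom{i\alpha}{j}\left[1-\e^{1-(1+ax)^b}\right]^j,
\end{equation*}
and to reorganize it as a single series indexed by $j$, collecting every term that carries the same power $\left[1-\e^{1-(1+ax)^b}\right]^j$. Grouping the remaining sum over $i$ then yields exactly the coefficient $w_j$, and identifying that common power with an $\ENH$ cdf finishes the argument.

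First I would record the convergence conditions that make the two successive applications of \eqref{eq:PowerSeriesExpansion} legitimate, since these are what the text above glosses over. For $x>0$ we have $1+ax>1$ and hence $(1+ax)^b>1$, so that $0<\e^{1-(1+ax)^b}<1$; equivalently the baseline satisfies $0<G(x)=1-\e^{1-(1+ax)^b}<1$. Consequently the first expansion is applied to $z=[1-G(x)]^{\alpha}\in(0,1)$ and the second to $z=G(x)\in(0,1)$, so both binomial series converge absolutely and the resulting double series is absolutely convergent for each fixed $x$. This is precisely the fact that licenses the rearrangement carried out next.

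Next I would interchange the order of summation and group terms, writing
\begin{equation*}
  F(x)=\sum_{j=0}^{\infty}\left[\sum_{i=0}^{\infty}(-1)^{i+j}\binom{\beta}{i}\binom{i\alpha}{j}\right]\left[1-\e^{1-(1+ax)^b}\right]^j=\sum_{j=0}^{\infty}w_j\,\left[1-\e^{1-(1+ax)^b}\right]^j.
\end{equation*}
Finally I would identify $H_j(x)=\{1-\exp[1-(1+ax)^b]\}^j=G(x)^j$ with the cdf of an exp-G random variable whose baseline is $\NH(a,b)$ and whose power parameter is $j$; by the definition of the exponentiated-G (Lehmann type 1) class recalled in this section, this is exactly the $\ENH(j,a,b)$ cdf. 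This produces the claimed linear representation $F(x)=\sum_{j\ge 0}w_j\,H_j(x)$ with the stated coefficients.

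I expect the only genuinely delicate point to be the justification of the term-by-term rearrangement of the double series: the passage from the $(i,j)$-indexed sum to the $j$-indexed sum requires that one may swap the two summations and regroup. The uniform bound $0<G(x)<1$ for every $x>0$ guarantees absolute convergence, so Fubini's theorem for double series applies and the regrouping is valid. Once this is granted, the remaining steps are purely formal bookkeeping, and I would stress that no positivity of the weights $w_j$ is required, since \eqref{eq:cdfExpansion} is a signed linear representation in terms of $\ENH$ cdfs rather than a genuine probabilistic mixture.
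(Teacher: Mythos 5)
Your route is the same as the paper's: the paper's own ``proof'' is precisely the two successive applications of \eqref{eq:PowerSeriesExpansion} displayed just before the proposition, followed by regrouping the double series in powers of $G(x)=1-\e^{1-(1+ax)^b}$ and identifying $G(x)^j$ with the ENH$(j,a,b)$ cdf; the paper performs this regrouping with no justification at all. You correctly isolate the interchange of summations as the one nontrivial step, but the justification you offer is false. Absolute convergence of the outer series and of each inner series separately does not imply joint absolute convergence, and here joint absolute convergence actually fails: for $j\le\lfloor i\alpha\rfloor$ one has $\binom{i\alpha}{j}\ge\binom{\lfloor i\alpha\rfloor}{j}$, so
\begin{equation*}
\sum_{j=0}^{\infty}\left|\binom{i\alpha}{j}\right|G(x)^j\;\ge\;\sum_{j=0}^{\lfloor i\alpha\rfloor}\binom{\lfloor i\alpha\rfloor}{j}G(x)^j\;=\;\left[1+G(x)\right]^{\lfloor i\alpha\rfloor}\;\ge\;\left[1+G(x)\right]^{i\alpha-1},
\end{equation*}
which grows geometrically in $i$, whereas for non-integer $\beta$ one has $\left|\binom{\beta}{i}\right|\sim C\,i^{-\beta-1}$, only polynomial decay. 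Hence $\sum_i\left|\binom{\beta}{i}\right|\sum_j\left|\binom{i\alpha}{j}\right|G(x)^j=\infty$ for every $x>0$, and Tonelli/Fubini cannot be invoked.

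Moreover, the defect is not only in your argument for the interchange; for non-integer $\beta$ the conclusion itself is false. For fixed $j\ge\beta$ the terms $(-1)^{i+j}\binom{\beta}{i}\binom{i\alpha}{j}$ have eventually constant sign (the sign of $\binom{\beta}{i}$ alternates once $i>\beta$, and $\binom{i\alpha}{j}>0$ for $i\alpha>j$) and magnitude of order $i^{\,j-\beta-1}$, so the series defining $w_j$ diverges. Equivalently, writing $u=G(x)$, the function $F=[1-(1-u)^\alpha]^\beta\sim\alpha^\beta u^\beta$ as $u\to0^+$ is not analytic at $u=0$ when $\beta$ is not a positive integer, so it cannot coincide on $(0,1)$ with any convergent power series $\sum_j w_j u^j$. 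The proposition and your proof are correct exactly when $\beta$ is a positive integer: then the $i$-sum is finite, each $[1-G(x)]^{i\alpha}$ has an absolutely convergent expansion in $G(x)$, and regrouping a finite sum of absolutely convergent series is legitimate (and, as you note, no positivity of the $w_j$ is needed). For non-integer $\beta$ the identity can at best be read formally, as a device for termwise calculation requiring separate justification; this limitation is inherited from the paper itself, whose derivation glosses over the same point.
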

  	  We can prove using Mathematica that $\sum_{j=0}^{\infty}w_j=1$ as expected.

  By differentiating \eqref{eq:cdfExpansion}, we obtain
  \begin{equation}
    f(x)=\alpha\beta g(x)\sum_{j=0}^\infty t_j\,G(x)^j
    \label{eq:pdfExpansion}
  \end{equation}
  where the coefficients (for $j \ge 0$) are given by
  \begin{equation}
    t_j=t_j(\alpha,\beta)=\sum_{i=0}^{\infty}(-1)^{i+j}\binom{\beta-1}{i}\binom{\alpha(i+1)-1}{j}
    \label{eq:pdfCoefficientsExpansion}
  \end{equation}
  Further, the corollary bellow follows:
  
  \begin{prop}
  Equation \eqref{eq:pdfExpansion} can be rewritten as
  \begin{equation}
    f(x)=\sum_{j=0}^{\infty}u_j\,h_{j+1}(x),
    \label{eq:pdfExpansionAlternative}
  \end{equation}
  where the coefficients are
  \begin{equation*}
    u_j=u_j(\alpha,\beta)=\frac{\alpha\beta}{j+1}\,t_j
    \label{eq:pdfCoefficientsExpansionAlternative}
  \end{equation*}
  and
  \begin{equation}
    h_{j+1}(x)= \frac{ab(j+1)(1+ax)^b\exp[1-(1+ax)^b]}{\{1-\exp[1-(1+ax)^b]\}^{-j}}
    \label{eq:pdfExponentiatedNH}
  \end{equation}
  is the ENH$(j+1,a,b)$ pdf.
  \end{prop}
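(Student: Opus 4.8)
The plan is to recognize that the series \eqref{eq:pdfExpansion} is already a mixture of exponentiated Nadarajah-Haghighi densities, up to a trivial per-term normalization. First I would recall the exp-G definition stated just above: for a power parameter $a>0$ the exp-G pdf is $h_a(x)=a\,g(x)\,G(x)^{a-1}$. Taking $G$ to be the NH cdf \eqref{eq:cdfbaseline} and choosing $a=j+1$, this produces the $\ENH(j+1,a,b)$ density $h_{j+1}(x)=(j+1)\,g(x)\,G(x)^{j}$. I would check that this coincides with the explicit expression \eqref{eq:pdfExponentiatedNH} by rewriting the denominator factor $\{\cdot\}^{-j}$ as a numerator factor $\{\cdot\}^{j}=G(x)^{j}$ and substituting the NH pdf \eqref{eq:pdfbaseline} for $g(x)$.

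Next I would manipulate \eqref{eq:pdfExpansion} term by term. Since every summand carries the common factor $g(x)\,G(x)^{j}$, I would multiply and divide the $j$-th term by $(j+1)$ in order to expose the group $(j+1)\,g(x)\,G(x)^{j}$, namely
\begin{equation*}
  f(x)=\alpha\beta\,g(x)\sum_{j=0}^{\infty}t_j\,G(x)^{j}
      =\sum_{j=0}^{\infty}\frac{\alpha\beta}{j+1}\,t_j\bigl[(j+1)\,g(x)\,G(x)^{j}\bigr].
\end{equation*}
Identifying the bracketed quantity with $h_{j+1}(x)$ and setting $u_j=\tfrac{\alpha\beta}{j+1}\,t_j$ then yields exactly \eqref{eq:pdfExpansionAlternative}, with the coefficients $t_j$ as given in \eqref{eq:pdfCoefficientsExpansion}.

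Because this is a purely algebraic rearrangement of the terms of a single series — no reordering or interchange of summations is needed beyond what the derivation of \eqref{eq:pdfExpansion} already justifies — there is no genuine analytic obstacle. The only step demanding a little care is the bookkeeping of matching $(j+1)\,g(x)\,G(x)^{j}$ against the stated closed form \eqref{eq:pdfExponentiatedNH} of the $\ENH(j+1,a,b)$ density; once that identification is confirmed, the coefficients $u_j$ can be read off directly and the proposition follows.
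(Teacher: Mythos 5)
Your proposal is correct and is exactly the argument the paper intends: the paper states this proposition as an immediate corollary of \eqref{eq:pdfExpansion} with no written proof, and the multiply-and-divide-by-$(j+1)$ rearrangement you give, identifying $(j+1)\,g(x)\,G(x)^{j}$ as the exp-G (here ENH) density with power parameter $j+1$, is the whole content. One caveat: your verification step against the displayed formula \eqref{eq:pdfExponentiatedNH} would not literally succeed, because that expression carries a typo --- the numerator factor $(1+ax)^{b}$ should be $(1+ax)^{b-1}$, as substituting the NH pdf \eqref{eq:pdfbaseline} into $(j+1)\,g(x)\,G(x)^{j}$ shows; this is an error in the paper's printed formula, not in your reasoning.
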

  
  Equation \eqref{eq:pdfExpansionAlternative} reveals that the EGNH pdf is an infinite linear combination of ENH pdfs.
  Thus, some structural properties of the EGNH distribution, such as the ordinary and incomplete moments and generating function, can be determined from those of the ENH model, which have been studied by \citet{Lemonte-newexponentialtype-2013} and \citet{Abdul-Moniem-ExponentiatedNadarajahand-2015}.

  \section{Moments}\label{sec:Moments}
  In this section, we derive the moments of the EGNH distribution as well as other related measures.
  Let $Y\sim\NH(a,b)$ and $X\sim\EGNH(\alpha,\beta,a,b)$.
  \citet{CordeiroOrtegaCunha-ExponentiatedGeneralizedClass-2013} proved that the moments of the EG distribution can be expressed as an infinite weighted sum of the $(r,j)$th probability weighted moments $\tau_{r,j}$ of the baseline random variable.
  Following this result, we can write from \eqref{eq:pdfExpansion}
  \begin{equation*}
    \mu_r'=\E(X^r)=\alpha\beta\sum_{j=0}^\infty t_j\,\tau_{r,j},
    \label{eq:momentsEGNH}
  \end{equation*}
  where $t_j$ is given by \eqref{eq:pdfCoefficientsExpansion} and
  \begin{equation*}
    \tau_{r,j}=\E[Y^r\,G(Y)^j]=\int_{-\infty}^{\infty}x^r\,G(x)^j\,g(x)dx.
    \label{eq:PWMNadarajahHaghighi}
  \end{equation*}

  By simple algebraic manipulations,
  \begin{equation}
    \tau_{r,j}=ab\int_0^\infty \frac{x^r(1+ax)^{b-1}\exp\left[1-(1+ax)^b\right]}{\left\{ 1-\exp\left[1-(1+ax)^b\right] \right\}^{-j}}\,dx.
    \label{eq:DedutionStep1}
  \end{equation}

  It is clear that $\left|\exp\left[1-\left(1+ax \right)^b\right]\right|<1$, and then applying \eqref{eq:PowerSeriesExpansion}
  in \eqref{eq:DedutionStep1} gives
  \begin{equation*}
    \tau_{r,j}=ab\,\sum_{k=0}^\infty(-1)^k\binom{j}{k}\int_0^\infty x^r (1+ax)^{b-1}\left\{\exp\left[1-(1+ax)^b\right] \right\}^{k+1}\,dx.
    \label{eq:DedutionStep2}
  \end{equation*}
  Then, we can write
  \begin{equation*}
    \tau_{r,j}=ab\,\sum_{k=0}^\infty(-1)^k\binom{j}{k}\e^{k+1}\int_0^\infty x^r(1+ax)^{b-1}\exp\left[-(k+1)(1+ax)^b\right]\,dx.
    \label{eq:DedutionStep3}
  \end{equation*}
  By setting $y=(k+1)(1+ax)^b$, $\tau_{r,j}$ reduces to
  \begin{equation*}
    \tau_{r,j}=\frac{1}{a^r}\sum_{k=0}^\infty(-1)^{ k+r }\binom{j}{k}\frac{\e^{k+1}}{k+1}\int_{k+1}^\infty\left[ 1-\left( \frac{y}{k+1}\right)^{\nicefrac{1}{b}} \right]^r\e^{-y}\,dy.
    \label{eq:DedutionStep4}
  \end{equation*}

  Taking $r$ integer and applying the binomial expansion, the last expression becomes
  \begin{equation*}
    \tau_{r,j}=\frac{1}{a^r}\sum_{k=0}^\infty\sum_{l=0}^r(-1)^{k+r+l}\binom{j}{k}\binom{r}{l}\frac{\e^{k+1}}{( k+1 )^{\nicefrac{l}{b}+1}}\Gamma\left( \frac{l}{b}+1,k+1 \right),
    \label{eq:DedutionStep5}
  \end{equation*}
  where $\Gamma(a,x)=\int_x^\infty y^{a-1}\e^{-y}\,dy$ denotes the complementary incomplete gamma function.
  Then, the $r$th ordinary moment of $X$ is given by
  \begin{equation}
    \mu_r'=\frac{\alpha\beta}{a^r}\sum_{j,k=0}^\infty\sum_{l=0}^r(-1)^{k+r+l}\,t_j\,\binom{j}{k}\binom{r}{l}\frac{\e^{k+1}}{(k+1)^{\nicefrac{l}{b}+1}}\Gamma\left( \frac{l}{b}+1,k+1 \right).
    \label{eq:OrdinaryOfEGNH}
  \end{equation}

  For $\alpha>0$ and $\beta>0$ integers, the moments in \eqref{eq:OrdinaryOfEGNH} reduce to
  \begin{equation*}
    \mu_r'=\frac{\alpha\beta}{a^r}\sum_{j=0}^{\alpha(i+1)-1}\sum_{k=0}^\infty\sum_{l=0}^r (-1)^{k+r+l}\,t_j\,\binom{j}{k}\binom{r}{l}\frac{\e^{k+1}}{(k+1)^{\nicefrac{l}{b}+1}}\Gamma\left( \frac{l}{b}+1,k+1 \right),
    \label{eq:OrdinaryOfEGNHtoNH}
  \end{equation*}
  where $t_j$ becomes
  \begin{equation*}
    t_j^*=\sum_{i=0}^{\beta-1}(-1)^{i+j}\binom{\beta-1}{i}\binom{\alpha(i+1)-1}{j}.
    \label{eq:pdfCoefficientsExpansion-alternative}
  \end{equation*}
  Additionally, the expression for $\mu_r'$ when $\alpha=\beta=1$ takes the form
  \begin{equation*}\label{eq:OrdinaryOfNH}
    \mu_r'=\frac{\e}{a^r} \sum_{l=0}^{r}(-1)^{l+r} \binom{r}{l}\Gamma\left(\frac{l}{b}+1,1\right),
  \end{equation*}
  which agrees with the moments given by \citet{NadarajahHaghighi-extensionexponentialdistribution-2011}.

  The central moments ($\mu_r$) of $X$ are given by
  \begin{equation}\label{eq:CentralandCumulants}
    \mu_r=\sum_{m=0}^r (-1)^m\binom{r}{m}\mu_1^{\prime m }\mu_{r-m}'.
  \end{equation}

  The cumulants ($\kappa_r$), skewness ($\gamma_1$) and kurtosis ($\gamma_2$) of $X$ are determined from \eqref{eq:OrdinaryOfEGNH} using well-known relationships $\kappa_r=\mu_r'-\sum_{m=0}^{r-1}\binom{r-1}{m-1}\kappa_m \mu_{r-m}'$, for $ r \ge 1$, $\gamma_1=\nicefrac{\kappa_3}{\kappa_2^{\nicefrac{3}{2}}}$ and $\gamma_2=\nicefrac{\kappa_4}{\kappa_2^{2}}$, where $\kappa_1=\mu_1'$.

  \section{Generating function} \label{sec:GeneratingFunction}

  The mgf of $X$ can be obtained from \eqref{eq:pdfExpansionAlternative} as
  \begin{align*}\label{eq:mgf2}
    M(s)=\sum_{j=0}^{\infty}u_j\,M_{j+1}(s),
  \end{align*}
  where $M_{j+1}(s)$ is the mgf of the ENH$(j+1,a,b)$ distribution \citep{BourguignonCarmoLeaoNascimentoPinhoCordeiro-newgeneralizedgamma-2015}
  given by
  \begin{equation}\label{eq:momentsENH}
    M_{j+1}(t)= \sum_{s,r=0}^{\infty} \frac{\eta_s\,g_{s,r}\,t^{j+1}}{r/b+1},
  \end{equation}
  where $\eta_i =\sum_{k=0}^{\infty}\frac{(-1)^i}{\lambda^kk!} \binom{k}{i}$, $g_{i,r}=(r\zeta_0)^{-1} \sum_{n=1}^{r}[n(i+1)-r]\zeta_n\,g_{i,r-n}$
  for $r \ge 1$, $g_{i,0}=\zeta_0^i$,  $\zeta_r = \sum_{m=0}^{\infty}f_m\,d_{m,r}$, $f_m=\sum_{j=m}^{\infty}(-1)^{j-m} \binom{j}{m} (\alpha^{-1})_j/j!$ and $d_{m,r}= (r a_0)^{-1}\sum_{v=0}^{r}[v(m+1)-r]\,a_v\,d_{m,r-v}$.
  Here, $(\alpha^{-1})_j=(\alpha^{-1})\times(\alpha^{-1}-1)\ldots(\alpha^{-1}-j+1)$ is the descending factorial.





  \section{Mean deviations} \label{sec:MeanDeviations}

  The mean deviations about the mean and the median of $X$ can be expressed, respectively, as $\delta_1(X)  =\E(|X-\mu_1'|)=2\mu_1'F(\mu_1')-2m_1(\mu_1')$ and $\delta_2(X) =\E(|X-\widetilde{X}|)=\mu_1'-2m_1(\widetilde{X})$, where $\mu_1'$ comes from \eqref{eq:OrdinaryOfEGNH}, $F(\mu_1')$ is easily calculated from \eqref{eq:cdfmodelo} and $m_1(z)= \int_{-\infty}^{z}\!xf(x)\,\mathrm{d}x$ is the first incomplete moment.

  A general equation for $m_1(z)$ can be derived from \eqref{eq:pdfExpansion} as
  \begin{equation}\label{eq:FirstIncompleteMoment}
    m_1(z)= \sum_{j=0}^{\infty}u_j\,J_{j+1}(z),
  \end{equation}
  where $J_{j+1}(z)= \int_{-\infty}^{z}\!x\,h_{j+1}(x)\,\mathrm{d}x$ and $h_{j+1}(x)$ is given by \eqref{eq:pdfExponentiatedNH}.
  After some algebra, we can prove that
  \begin{equation*}\label{eq:MeanDeviationsQuantity-expansion}
    J_{j+1}(z)= \frac{(j+1)}{a}\sum_{k=0}^{\infty} \sum_{l=0}^{1}\frac{(-1)^{k+l+1}\e^{k+1}}{(k+1)^{1+\frac{l+1}{b}}}\,\binom{j}{k}\,\phi(a,b,k,l;z),
  \end{equation*}
  where
  \begin{equation*}\label{eq:MeanDeviationsQuantity-expansion2}
    \phi(a,b,k,l;z)=\gamma\left( \frac{b+l+1}{b},(k+1)(1+az)^b\right) - \gamma\left( \frac{b+l+1}{b},k+1\right).
  \end{equation*}

  The Bonferroni and Lorenz curves are defined (for a given probability $\pi$) by $B(\pi)= \nicefrac{m_1(q)}{\pi\mu_1'}$ and $L(\pi)=\nicefrac{m_1(q)}{\mu_1'}$, respectively, where $q=Q(\pi)$ comes from \eqref{eq:quantile function}.
  Using \eqref{eq:FirstIncompleteMoment}, we can easily obtain $B(\pi)$ and $L(\pi)$.

  \section{Rényi Entropy} \label{sec:RenyiEntropy}

  The entropy of a random variable $X$ with pdf $f(x)$ is a measure of variation of the uncertainty.
  For any real parameter $\lambda>0$ and $\lambda\ne1$, the Rényi entropy is given by
  \begin{equation}\label{eq:RenyiEntropyDefinition}
    I_R(\lambda)= \frac{1}{(1-\lambda)} \log\left( \int_{-\infty}^{\infty}\!f(x)^\lambda\,\mathrm{d}x\right).
  \end{equation}

  Inserting \eqref{eq:cdfbaseline} in equation \eqref{eq:RenyiEntropyDefinition} and, after some algebra, we obtain
  \begin{equation*}\label{eq:RenyiEntropyPasso1}
    I_R(\lambda)= \frac{1}{1-\lambda}\left\{\lambda\log(ab\alpha\beta)+\alpha\lambda+\log\int_{0}^{\infty}\! \frac{(1+ax)^{\lambda(b-1)}\exp\left[-\alpha\lambda\left(1+ax\right)^b\right]}{\left\{1-\exp\left[1-\left(1+ax\right)^b\right]^\alpha\right\}^{\lambda(1-\beta)}}\,\mathrm{d}x\right\}.
  \end{equation*}

  Applying the binomial expansion gives
  \begin{equation*}\label{eq:RenyiEntropyPasso2}
    I_R(\lambda)= -\log(ab)+ \frac{1}{1-\lambda}\left\{\lambda\log(\alpha\beta)+\alpha\lambda+\log \Sigma(\alpha,\beta,b,\lambda)\right\},
  \end{equation*}
  where
  \begin{equation}\label{eq:RenyiEntropyPasso2-quantity}
    \Sigma(\alpha,\beta,b,\lambda) =\sum_{i=0}^{\infty} \binom{\lambda(\beta-1)}{i}\frac{(-1)^i \e^{i\alpha}}{\left[(\lambda+i)\alpha\right]^{ \frac{\lambda(b-1)+1}{b}}} \Gamma\left( \frac{\lambda(b-1)+1}{b},(\lambda+i)\alpha\right).
  \end{equation}

  For $\beta>0$ integer, Equation \eqref{eq:RenyiEntropyPasso2-quantity} takes the form
  \begin{equation*}\label{eq:RenyiEntropyPasso3-quantity}
    \Sigma(\alpha,\beta,b,\lambda) =\sum_{i=0}^{\beta-1} \binom{\lambda(\beta-1)}{i}\frac{(-1)^i \e^{i\alpha}}{\left[(\lambda+i)\alpha\right]^{ \frac{\lambda(b-1)+1}{b}}} \Gamma\left( \frac{\lambda(b-1)+1}{b},(\lambda+i)\alpha\right).
  \end{equation*}

  For $\alpha=\beta=1$, the expression above reduces to
  \begin{equation*}\label{eq:RenyiEntropyNH}
    I_R(\lambda)= -\log(ab)+ \frac{1}{1-\lambda}\left\{\lambda- \frac{\lambda(b-1)+1}{b}\log(\lambda)+\log\left[\Gamma\left( \frac{\lambda(b-1)+1}{b},\lambda\right)\right]\right\},
  \end{equation*}
  which agrees with a result given by \cite{NadarajahHaghighi-extensionexponentialdistribution-2011}.

  Figure \ref{fig:RenyiEntropy} displays plots of some Rényi entropy curves versus $\lambda$ for the EGNH, ENH and NH distributions with some parameter values. 
  In this case, one can note that the NH entropy is fixed thus, we can measure the impact of the change of $\alpha$ and $\beta$ additional parameters on the EGNH entropy. This quantity increases when $\alpha$ and $\beta$ increases with respect the comparison between ENH and EGNH entropies, one can observe that the former is a lower bond of the second and the distance between them is higher when $\alpha$ increases.
  
  \begin{figure*}[htb!] 
    \centering
    \subfigure[$\alpha=\beta=3,a=b=0.5$]{
      \includegraphics[width=0.31\textwidth]{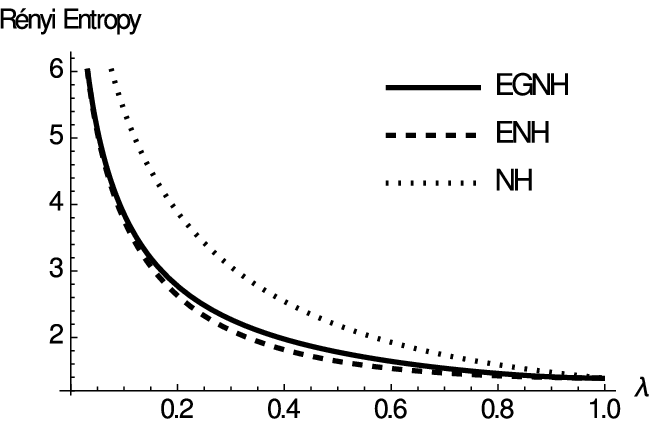}
      \label{fig:RenyiEntropy1}
    }
    \subfigure[$\alpha=\beta=5,a=b=0.5$]{
      \includegraphics[width=0.31\textwidth]{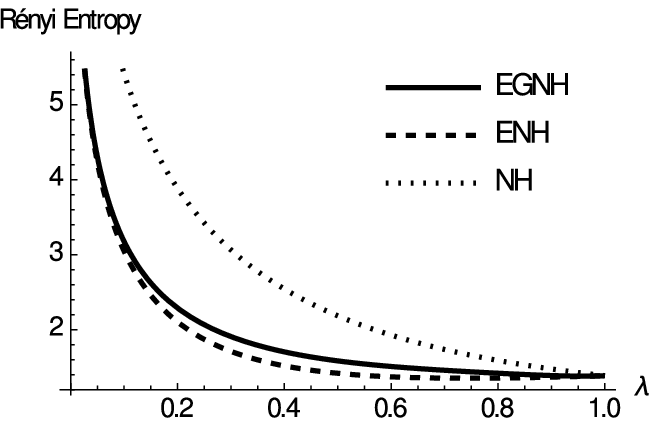}
      \label{fig:RenyiEntropy2}
    }
    \subfigure[$\alpha=\beta=7,a=b=0.5$]{
      \includegraphics[width=0.31\textwidth]{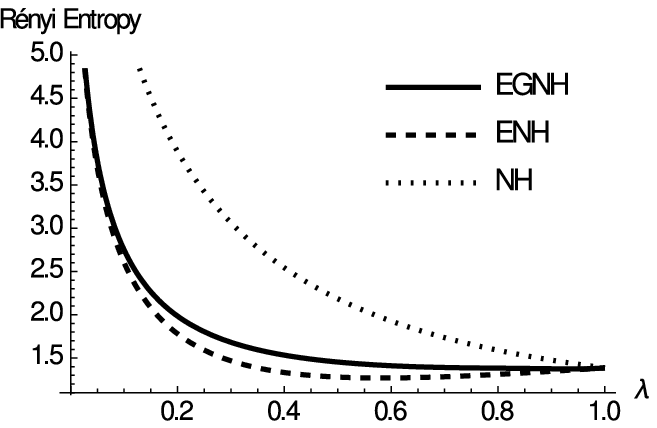}
      \label{fig:RenyiEntropy3}
    }
    \caption{Rényi entropies curves versus $\lambda$ for the EGNH, ENH and NH distributions}
    \label{fig:RenyiEntropy}
  \end{figure*}




  \section{Order statistics} \label{sec:OrderStatistics}

  The pdf $f_{i:n}(x)$ of the $i$th order statistic, say $X_{i:n}$, for $i=1,\ldots,n$, from independent identically distributed random variables $X_1,\ldots X_n$ is given by
  $$f_{i:n}(x)= \frac{f(x)}{B(i,n-i+1)}F(x)^{i-1}\left[1-F(x)\right]^{n-i}.$$

  Following a result given by \cite{CordeiroOrtegaCunha-ExponentiatedGeneralizedClass-2013}, the 
  pdf $f_{i:n}(x)$ for the EGNH distribution can be expressed as 
  \begin{equation}\label{eq:OrderStatisticsEGclass}
    f_{i:n}(x)= \frac{\alpha\beta}{B(i,n-i)} \sum_{l=0}^{\infty}\,s_l\,h_{l+1}(x),
  \end{equation}
  where
  \begin{equation*}\label{eq:OrderStatisticsCoeficients}
    s_l= \sum_{j=0}^{n-i} \sum_{k=0}^{\infty} (-1)^{j+k+l} \binom{n-i}{j} \binom{\beta(j+1)-1}{k} \binom{\alpha(k+1)-1}{l}
  \end{equation*}
  and $h_{l+1}(x)$ is the ENH$(l+1,a,b)$ pdf given by \eqref{eq:pdfExponentiatedNH}.

  Equation \eqref{eq:OrderStatisticsEGclass} reveals that the pdf of the EGNH order statistic is a linear combination of ENH densities.
  A direct application of this expansion allows us to determine the ordinary moments and the mgf of the EGNH order statistics.

  The $r$th ordinary moment of $X_{i:n}$ is given by
  \begin{equation}\label{eq:OrderDefinition}
    \E(X^r_{i:n})= \frac{\alpha\beta}{B(i,n-i+1)} \sum_{l=0}^{\infty}s_l\,\int_{0}^{\infty}\!x^r\,h_{l+1}(x)\,\mathrm{d}x.
  \end{equation}
  Equation \eqref{eq:OrderDefinition} reveals that the moments of the order statistics depend on the moments of the ENH$(l+1,a,b)$ distribution. \citet{Lemonte-newexponentialtype-2013} determined the moments of the ENH distribution as
  \begin{equation*}\label{eq:OrdinaryOfENH}
    \mu_r'= \frac{\beta}{a^r} \sum_{m=0}^{r} \sum_{p=0}^{\infty}\binom{r}{m} \binom{\beta-1}{p} \frac{(-1)^{r-m+p}\e^{p+1}}{(p+1)^{\frac{m}{b}+1}} \Gamma\left( \frac{m}{b}+1,p+1\right).
  \end{equation*}

  From this result, Equation \eqref{eq:OrderDefinition} can be reduced to
  \begin{equation*}\label{eq:OrderEGNH}
    \E(X^r_{i:n})= \frac{\alpha\beta^2}{B(i,n-i+1)} \sum_{l=0}^{\infty} s^*_l\,\Gamma\left( \frac{m}{b}+1,t+1\right),
  \end{equation*}
  where
  \begin{equation*}\label{eq:OrderStatisticsCoeficients2}
    s^*_l =\sum_{m=0}^{r} \sum_{p=0}^{\infty} s_l\,\binom{r}{m} \binom{\beta-1}{p} \frac{(-1)^{r-m+p}\e^{p+1}}{a^r(p+1)^{ \frac{m}{b}+1}}.
  \end{equation*}

  Further, the mgf of $X_{i:n}$ can follow from \eqref{eq:OrderStatisticsEGclass} as
  \begin{equation*}\label{eq:OrderEGNHmgf}
    M(t)= \frac{\alpha\beta}{B(i,n-i+1)} \sum_{l=0}^{\infty}s_l\,\int_{0}^{\infty}\e^{tx}\,h_{l+1}(x)\,\mathrm{d}x = \frac{\alpha\beta}{B(i,n-i+1)} \sum_{l=0}^{\infty}s_l\,M_{l+1}(t),
  \end{equation*}
  where $M_{l+1}(t)$ is given by \eqref{eq:momentsENH}.


  \section{Maximum likelihood estimation} \label{sec:MLE}

  In this section, we provide an analytic procedure to obtain the MLEs for the EGNH parameters using the \emph{profile log-likelihood function} (pllf).
  The MLEs are employed due to their interesting asymptotic properties.
  By means of such properties, one can construct confidence intervals for the model parameters.

  Let X be the EGNH distribution with vector of parameters $ \boldsymbol{\theta}=(\alpha,\beta,a,b)^\top $.
  The \emph{log-likelihood function} for $ \boldsymbol{\theta} $, given the observed sample $ x_1,\ldots,x_n $, due to \eqref{eq:pdfmodelo} is given by
  \begin{equation}\label{eq:log-likelihood function}
    \begin{split}
      \ell(\boldsymbol{\theta})=&n[\alpha+\log(a\alpha b\beta)]+(b-1)\sum_{i=1}^{n}\log(ax_i+1)-\alpha\sum_{i=1}^{n}(ax_i+1)^b\\&+
      (\beta-1)\sum_{i=1}^{n}\log\left\{1-\left[\e^{1-(ax_i+1)^b}\right]^\alpha\right\}.
    \end{split}
  \end{equation}

  The components of the associated score vector $ \textbf{U}=\left(\frac{\partial\ell}{\partial\alpha},\frac{\partial\ell}{\partial\beta},\frac{\partial\ell}{\partial a},\frac{\partial\ell}{\partial b}\right)^\top $ are
  \begin{flalign*}
    \frac{\partial\ell(\boldsymbol{\theta})}{\partial\alpha}=&\frac{n}{\alpha} + \sum\limits_{i=1}^{n}\left[1-(1+ax_i)^b\right]\left\{1- \frac{(\beta-1) \{ \exp\left[1-(1+ax_i)^b\right] \}^\alpha}{1-\left\{ \exp\left[1-(1+ax_i)^b\right] \right\}^\alpha} \right\} ,&&
  \end{flalign*}
  \begin{flalign}
    \frac{\partial\ell(\boldsymbol{\theta})}{\partial\beta}=&\frac{n}{\beta} + \sum\limits_{i=1}^{n}\log\left\{ 1-\left\{ \exp\left[1-(1+ax_i)^b\right] \right\}^\alpha \right\} \label{eq:scorebeta},&&
  \end{flalign}
  \begin{flalign*}
    \frac{\partial\ell(\boldsymbol{\theta})}{\partial a}=&\frac{n}{a} + (b-1)\sum\limits_{i=1}^{n}\frac{x_i}{1+ax_i} - \alpha\sum\limits_{i=1}^{n}\left[bx_i(1+ax_i)^{b-1}\right]\nonumber&&\\
    &+\alpha(\beta-1)\sum\limits_{i=1}^{n}\frac{bx_i(1+ax_i)^{b-1}\exp\left\{ \alpha\left[ 1-(1+ax_i)^b \right] \right\}}{1-\{ \exp\left[1-(1+ax_i)^b\right] \}^\alpha},&&
  \end{flalign*}
  and
  \begin{flalign*}
    \frac{\partial\ell(\boldsymbol{\theta})}{\partial b}=&\frac{n}{b} + \sum\limits_{i=1}^{n}\log(1+ax_i) - \alpha\sum\limits_{i=1}^{n}\left\{\left[\log(1+ax_i)\right](1+ax_i)^b\right\}\nonumber&&\\
    &+\alpha(\beta-1)\sum\limits_{i=1}^{n}\frac{ \left[\log(1+ax_i)\right](1+ax_i)^b\exp\left\{ \alpha\left[ 1-(1+ax_i)^b \right] \right\}}{1-\{ \exp\left[1-(1+ax_i)^b\right] \}^\alpha}.&&
  \end{flalign*}

  The MLEs, $\boldsymbol{\hat{\theta}}=(\hat{\alpha},\hat{\beta},\hat{a},\hat{b})^\top$, can be obtained numerically by solving the system of nonlinear equations
  \begin{equation*}
    \frac{\partial\ell(\boldsymbol{\theta})}{\partial\alpha} = \frac{\partial\ell(\boldsymbol{\theta})}{\partial\beta} = \frac{\partial\ell(\boldsymbol{\theta})}{\partial a} = \frac{\partial\ell(\boldsymbol{\theta})}{\partial b} = 0.
  \end{equation*}

  Note that from \eqref{eq:scorebeta} it is possible to obtain a semi-closed estimator of $\beta$.
  From $\frac{\partial\ell(\boldsymbol{\theta})}{\partial\beta}=0$, the estimator for $\beta$ is given by
  \begin{equation}
    \hat{\beta}(\hat{\alpha},\hat{a},\hat{b})=-\frac{n}{\sum\limits_{i=1}^{n}\log\left\{ 1-\left\{ \exp\left[1-(1+\hat{a}x_i)^{\hat{b}}\right] \right\}^{\hat{\alpha}}\right\}}.
    \label{eq:betaestimator}
  \end{equation}
  
  Based on \eqref{eq:betaestimator}, fixed on $x_1,\ldots,x_n$, $\hat{\beta}\to\hat{0}^+$ when $\hat{b}\to\hat{0}^+$ and/or $\hat{\alpha}\to\hat{0}^+$.
  This behavior anticipates that estimates for smaller $\alpha$ and/or $b$ may require improved estimation procedures.

  Another way to obtain the MLEs is using the pllf.
  We suppose $ \beta $ fixed and rewrite \eqref{eq:log-likelihood function} as $ \ell(\boldsymbol{\theta})=\ell_\beta(\alpha,a,b)$ (to show that $ \beta $ is fixed but $\boldsymbol{\theta}_\beta=(\alpha, a, b )^\top$ varies).
  To estimate $ \boldsymbol{\theta}_\beta $, we maximize $ \ell_\beta(\alpha,a,b) $ with respect to $ \boldsymbol{\theta}_\beta $, i.e.,
  \begin{equation*}\label{eq:profile estimator theta beta}
    \boldsymbol{\hat{\theta}}_\beta=\arg\max_{\boldsymbol{\theta}_\beta}\ell_\beta(\alpha,a,b).
  \end{equation*}

%

  By replacing $\beta$ by $\hat{\beta}(\alpha,a,b)$ in \eqref{eq:log-likelihood function}, we can obtain the pllf for $\alpha$, $a$ and $b$ as

  \begin{align}\label{eq:ProfileLogLikelihoodFuntion}
    \ell_{\beta}(\alpha,a,b) = & \left\{ \alpha+\log  \left\{ \alpha-{\frac {nab}{\sum _{i=1}^{n}\log \left[ 1- \left( {{\rm e}^{1- \left( ax_{{i}}+1 \right) ^{b}}} \right) ^{\alpha} \right] }} \right\}  \right\}                                                                       \nonumber\\
    & + \left( b-1 \right) \sum _{i=1}^{n}\log  \left( ax_{{i}}+1 \right) -\alpha\,\sum _{i=1}^{n} \left( ax_{{i}}+1 \right) ^{b}                                                                                                                          \nonumber\\
    & - \left\{ {\frac {n}{\sum _{i=1}^{n} \log  \left[ 1- \left( {{\rm e}^{1- \left( ax_{{i}}+1 \right) ^{b}}} \right) ^{\alpha} \right] }}+1 \right\} \sum _{i=1}^{n}\log  \left\{ 1- \left[ {{\rm e}^{1- \left( ax_{{i}}+1 \right) ^{b}}} \right] ^{\alpha
      } \right\}.
  \end{align}

  As a consequence MLEs for $\alpha$, $a$ and $b$ can also be determined by maximizing the pllf \eqref{eq:ProfileLogLikelihoodFuntion} with respect to these parameters.
  Another way to obtain the MLEs is to solve the \emph{profile log-likelihood equations}
  \begin{align*}\label{eq:ProfileLogLikelihoodEquations}
    \frac{\partial\ell_\beta(\alpha,a,b)}{\partial\alpha} = \frac{\partial\ell_\beta(\alpha,a,b)}{\partial a} =\frac{\partial\ell_\beta(\alpha,a,b)}{\partial b}=0.
  \end{align*}

  \noindent It should be noted that the maximization of \eqref{eq:ProfileLogLikelihoodFuntion} is simpler than the maximization of \eqref{eq:log-likelihood function} with respect to their respective parameters, since the pllf has three parameters, whereas the llf has four parameters.

  For interval estimation on the model parameters, we require the $ 4\times 4 $ observed information matrix, $J(\theta)$, whose elements.

  Under standard regularity conditions, the asymptotic distribution of $\sqrt{n}(\boldsymbol{\hat{\theta}}-\boldsymbol{\theta})$ is multivariate normal $ N_4(0,I(\boldsymbol{\theta})^{-1}) $, where $ I(\boldsymbol{\theta}) $ is the expected information matrix.
  This approximated distribution holds when $ I(\boldsymbol{\theta}) $ is replaced by $ J(\boldsymbol{\theta}) $.
  The multivariate normal $ N_4(0,J(\boldsymbol{\theta})^{-1}) $ distribution for $\sqrt{n}(\boldsymbol{\hat{\theta}}-\boldsymbol{\theta})$ can be used to construct approximate confidence intervals for the individual parameters.

  \section{Simulation studies} \label{sec:SimulationStudies}

  In this section, we assess the performance of the MLEs for $\alpha$, $\beta$, $a$ and $b$  by means of a Monte Carlo study.
  To that end, we adopt sample sizes $n\in\{10,15,\ldots,250\}$ and the parameter values $\boldsymbol{\theta}=(\alpha,\beta,a,b)=(1.8\times10^{-3},2.83\times10^{-1},1.75\times10^{-3},47.066)$ (defined from the first application to real data discussed in Section \ref{sec:Applications}).

  Our simulation study consists of the following steps.
  For each pair $(\boldsymbol{\theta},n)$, with $\boldsymbol{\theta}=(\alpha,\beta,a,b)$:
  \begin{enumerate}
    \item generate $n$ outcomes of the EGNH distribution;
    \item obtain the MLEs;
    \item compute the average biases and standard errors.
  \end{enumerate}
  The simulation study is performed using the \texttt{R} programming language and the BFGS (constrained) method. 
  For maximizing the pllf \eqref{eq:ProfileLogLikelihoodFuntion}, we use the MaxLik package \citep{HenningsenToomet-maxLik:packagemaximum-2011}.

  Figures \ref{fig:ConvergencePlot1-stderr} and \ref{fig:ConvergencePlot1-bias} display results of this simulation study.
  As expected, both biases and standard errors decrease when the sample size increases.
  This proposed estimation procedure can be used effectively for estimating the parameters of the EGNH family. Further, it is important to note that the estimate for $\beta$ are good, even before the small value considered of $\alpha$.

  \begin{figure}[!htb]
    \includegraphics[width=\textwidth]{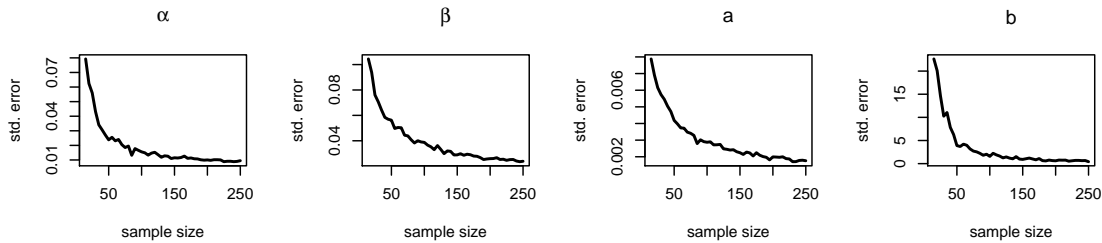}
    \caption{Std. errors of the estimates vs. sample size for $(\alpha,\beta,a,b)=(1.8\times10^{-3},2.83\times10^{-1},1.75\times10^{-3},47.066)$}
    \label{fig:ConvergencePlot1-stderr}
    \centering
  \end{figure}

  \begin{figure}[!htb]
    \includegraphics[width=\textwidth]{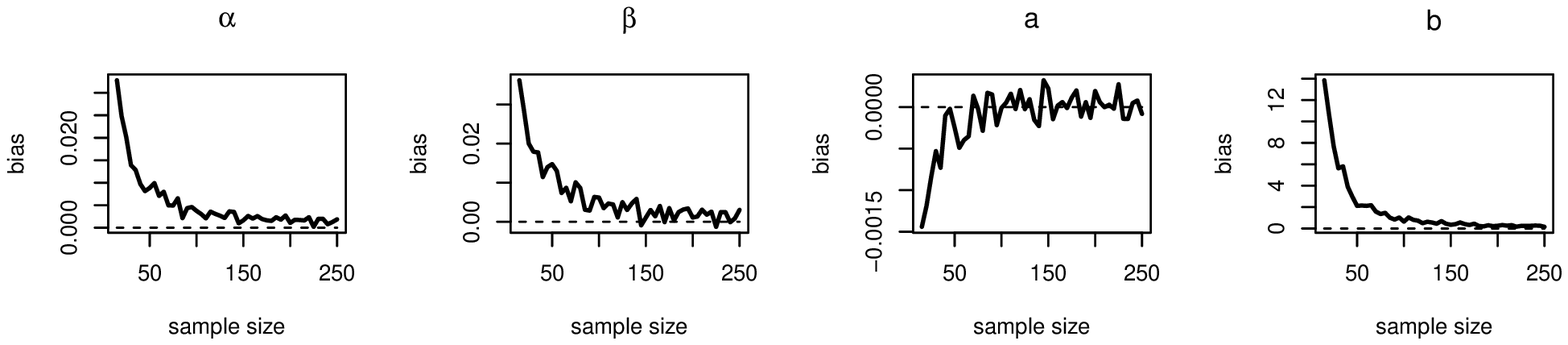}
    \caption{Biases of the estimates vs. sample size for $(\alpha,\beta,a,b)=(1.8\times10^{-3},2.83\times10^{-1},1.75\times10^{-3},47.066)$}
    \label{fig:ConvergencePlot1-bias}
    \centering
  \end{figure}



  \section{Applications} \label{sec:Applications}

  \begin{figure}[htb!]
    \centering
    \subfigure[\citeauthor{Aarset-HowtoIdentify-1987}]{
      \includegraphics[width=0.45\textwidth]{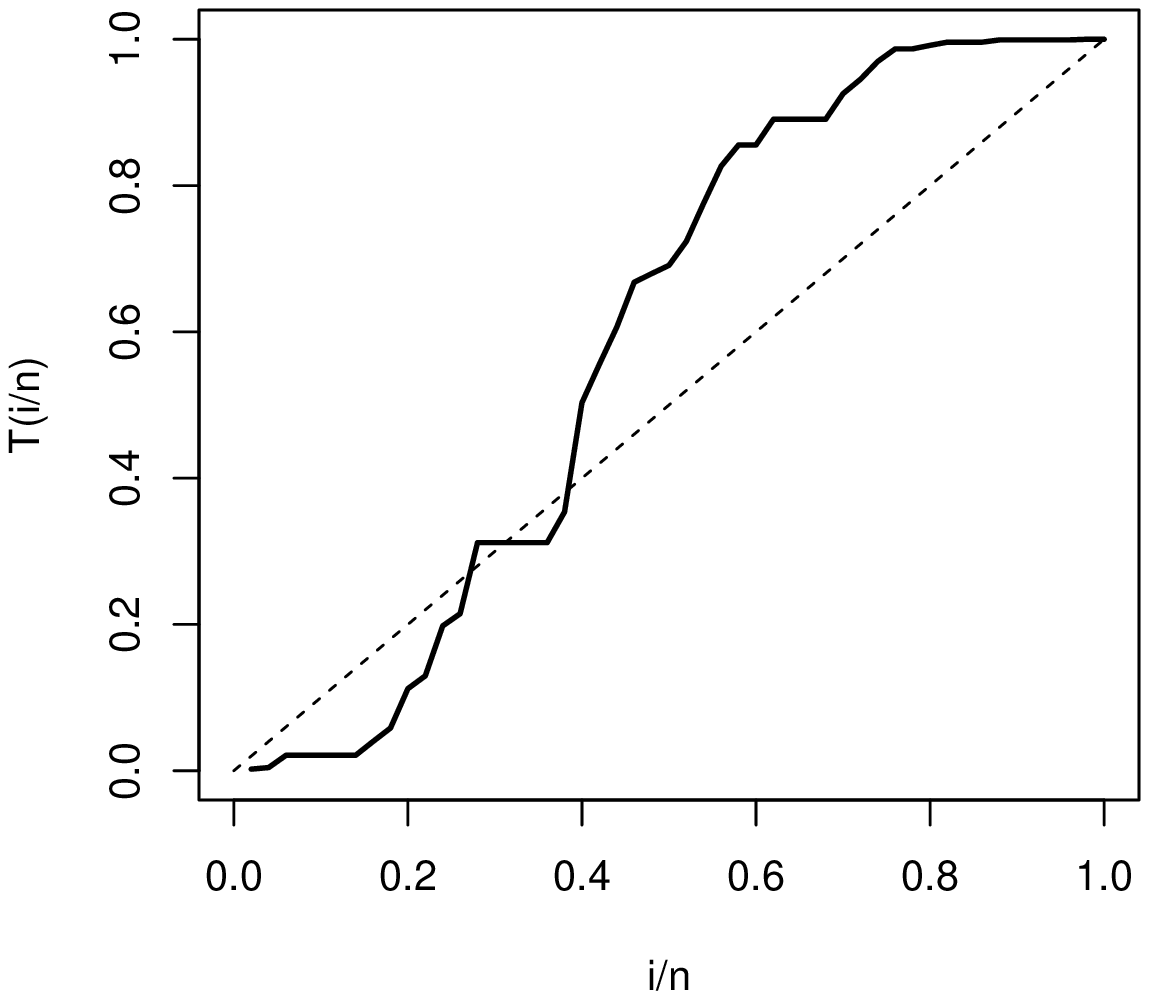}
      \label{fig:TTT-Application1}
    }
    \subfigure[\citeauthor{AndrewsHerzberg-StressRuptureLife-1985}]{
      \includegraphics[width=0.45\textwidth]{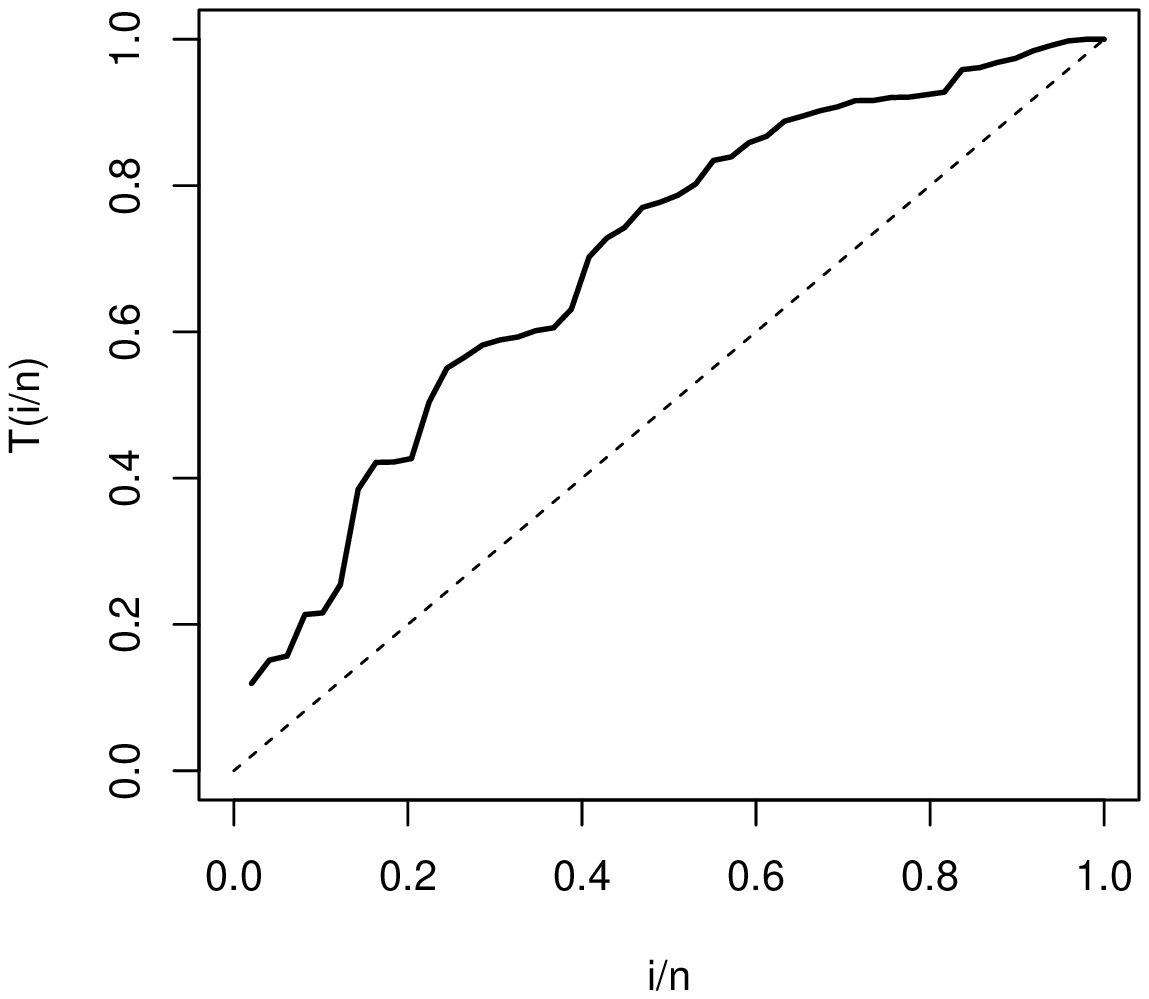}
      \label{fig:TTT-Application2}
    }
    \caption{TTT-plots of datasets}
    \label{fig:TTT-Applications}
  \end{figure}

  In this section, we present two applications to real data in order to illustrate the EGNH potentiality.
  The first uncensored real dataset refers to the lifetimes of 50 components \citep{Aarset-HowtoIdentify-1987}: 0.1, 7, 36, 67, 84, 0.2, 11, 40, 67, 84, 1, 12, 45, 67, 84, 1, 18, 46, 67, 85, 1, 18, 47, 72, 85, 1, 18, 50, 75, 85, 1, 18, 55, 79, 85, 2, 18, 60, 82, 85, 3, 21, 63, 82, 86, 6, 32, 63, 83, 86. The second uncensored dataset represents the stress-rupture life of kevlar 49/epoxy strands, which were subjected to constant sustained pressure at 70\% stress level until all units had failed \citep{AndrewsHerzberg-StressRuptureLife-1985}: 1051, 1337, 1389, 1921, 1942, 2322, 3629, 4006, 4012, 4063, 4921, 5445, 5620, 5817, 5905, 5956, 6068, 6121, 6473, 7501, 7886, 8108, 8546, 8666, 8831, 9106, 9711, 9806, 10205, 10396, 10861, 11026, 11214, 11362, 11604, 11608, 11745, 11762, 11895, 12044, 13520, 13670, 14110, 14496, 15395, 16179, 17092, 17568, 17568.
  For a previous study with this dataset, see \citet{CoorayAnanda-GeneralizationHalfNormal-2008}.

  Table \ref{table:DescriptiveStatistics} gives a descriptive summary of the two samples.
  For both datasets, the median is greater than the mean (indicating asymmetry).
  The \citeauthor{Aarset-HowtoIdentify-1987} dataset has negative skewness (left-skewed data), while the \citeauthor{AndrewsHerzberg-StressRuptureLife-1985} dataset has positive one (right-skewed data).

  \begin{table}[htb!]
    \caption{Descriptive Statistics}
    \begin{tabular}{lrrrrrrr}
      \hline
      Datasets                                   & Mean     & Median & Variance & Skew. & Kurt. & Min. & Max.\\ \hline
      \citeauthor{Aarset-HowtoIdentify-1987}                  & 45.686   & 48.5   & 1078.153 & -0.1378  & 1.414 & 0.1     & 86 \\
      \citeauthor{AndrewsHerzberg-StressRuptureLife-1985} & 8805.694 & 8831   & 20738145 & 0.097    & 2.172    & 1051    & 17568 \\
      \hline
    \end{tabular}
    \label{table:DescriptiveStatistics}
  \end{table}

  In many applications empirical hrf shape can indicate a particular model.
  The plot of total time on test (TTT) \citep{Aarset-HowtoIdentify-1987} can be useful in this sense.
  The TTT plot is obtained by plotting $G(r/n)= \nicefrac{( \sum_{i=1}^{r}y_{i:n}+(n-r)y_{r:n})}{\sum_{i=1}^{n}y_{i:n}}$, where $r=1,\ldots,n$ and $y_{i:n} (i=1,\ldots,n)$ are the order statistics of the sample, against $r/n$.
  The TTT plots for the current datasets are presented in Figure \ref{fig:TTT-Applications}.
  The TTT plot for the \citeauthor{Aarset-HowtoIdentify-1987}'s data indicates a bathtub-shaped hrf and the TTT plot for the \citeauthor{AndrewsHerzberg-StressRuptureLife-1985}'s data indicates an increasing hrf.
  Therefore, these plots reveal the appropriateness of the EGNH distribution to fit these datasets, since the new model can present bathtub-shaped and increasing hrfs.

  We compare the fits of the EGNH distribution defined in \eqref{eq:pdfmodelo} with the submodels NH, EE and ENH and additionally with some other lifetime models with four, three and two parameters, namely:
  \begin{itemize}
    \item The Kumaraswamy Nadarajah-Haghighi (KNH) distribution \citep{Lima-TheHalf-normalGeneralizedFamilyAndKumaraswamyNadarajah-haghighiDistribution-2015} having pdf (for $x>0$) given by
      \begin{align*}
        f_{KNH}(x)= \frac{ab\alpha\beta(1+ax)^b\exp\left[1-(1+ax)^b\right]\left\{1-\exp\left[1-(1+ax)^b\right]\right\}^{\alpha-1}}{\left\{1-\left\{1-\exp\left[1-\left(1+ax\right)^b\right]\right\}^\alpha\right\}^{1-\beta}}.
      \end{align*}
    \item The beta Nadarajah-Haghighi (BNH) distribution \citep{Dias-NewContinuousDistributionsAppliedToLifetimeDataAndSurvivalAnalysis-UniversidadeFederaldePernambuco-2016} having pdf (for $x>0$) given by
      \begin{align*}
        f_{BNH}(x)=\frac{ab}{B(\alpha,\beta)}(1+ax)^{b-1}\left\{1-\exp\left[1-(1+ax)^b\right]\right\}^{\alpha-1}\left\{\exp\left[1-(1+ax)^b\right]\right\}^{\beta}.
      \end{align*}
    \item The Zografos-Balakrishnan Nadarajah-Haghighi (ZBNH) distribution \citep{BourguignonCarmoLeaoNascimentoPinhoCordeiro-newgeneralizedgamma-2015} having pdf (for $x>0$) given by
      \begin{align*}
        f_{ZBNH}(x)= \frac{ab}{\Gamma(\alpha)}(1+ax)^b\left[\left(1+ax\right)^b-1\right]^{\alpha-1}\exp\left\{-\left[\left(1+ax\right)^b-1\right]\right\}.
      \end{align*}
    \item The Marshall-Olkin Nadarajah-Haghighi (MONH) distribution \citep{LemonteCordeiroMoreno-Arenas-newusefulthree-2016} having pdf (for $x>0$) given by
      \begin{align*}
        f_{MONH}(x)=  \frac{ab\alpha(1+ax)^{b-1}\exp\left[1-\left(1+ax\right)^b\right]}{\left\{1-\left(1-\alpha\right)\exp\left[1-\left(1+ax\right)^b\right]\right\}^2}.
      \end{align*}
    \item The transmuted Nadarajah-Haghighi (TNH) distribution \citep{AhmedMuhammedElbatal-NewClassExtension-2015} having pdf (for $x>0$) given by
      \begin{align*}
        f_{TNH}(x)=ab (1+a x)^{b-1}\exp\left[1-\left(1+a x\right)^b\right]\left\{\left(1-\alpha\right)+2\alpha\exp\left[1-\left(1+a x\right)^b\right]\right\}.
      \end{align*}
    \item The modified Nadarajah-Haghighi (MNH) distribution \citep{El-DamceseRamadan-StudiesPropertiesand-2015} having pdf (for $x>0$) given by
      \begin{align*}
        f_{MNH}(x)=\alpha(\lambda+2\beta x)(1+\lambda x+\beta x^2)^{\alpha-1}\exp\left[1-\left(1+\lambda x+\beta x^2\right)^\alpha\right].
      \end{align*}
    \item The generalized power Weibull (GPW) distribution \citep{NikulinHaghighi-ChiSquaredTest-2006} having pdf (for $x>0$) given by
      \begin{align*}
        f_{GPW}(x) = abcx^{c-1}\left( 1+a{x}^{c} \right) ^{b-1}\exp[1- \left( 1+a{x }^{c} \right) ^{b}].
      \end{align*}
    \item The classic Weibull distribution having pdf (for $x>0$) given by
      \begin{align*}
        f_{Weibull}(x) = \left( \frac{a}{b}\right) \left( \frac{x}{b}\right)^{a-1} \exp\left[- \left( \frac{x}{b}\right)^a\right].
      \end{align*}
  \end{itemize}


    Tables \ref{table:EstimativesApplication1} and \ref{table:EstimativesApplication2} present the MLEs and their standard errors for some models fitted to the \citeauthor{Aarset-HowtoIdentify-1987}'s and \citeauthor{AndrewsHerzberg-StressRuptureLife-1985}'s datasets, respectively.
  In order to compare the competitive models, Tables \ref{table:GoodnessOfFitApplication1} and \ref{table:GoodnessOfFitApplication2} list the Goodness-of-Fit (GoF) statistics of the fitted models for the \citeauthor{Aarset-HowtoIdentify-1987}'s and \citeauthor{AndrewsHerzberg-StressRuptureLife-1985}'s datasets, respectively.
  We use the following GoF statistics:
  Cramér-von Mises (W*),
  Anderson Darling (A*),
  Kolmogorov-Smirnov (KS),
  Akaike Information Criterion (AIC),
  Bayesian Information Criterion (BIC),
  Consistent Akaike Information Criterion (CAIC) and
  Hannan-Quinn Information Criterion (HQIC).
  They are used to verify the quality of the fits.
  In general, smaller GoF values are associated with better fits.
  
  \begin{table}[htb!]
  	\centering
  	\caption{Estimates for \citeauthor{Aarset-HowtoIdentify-1987}'s dataset}
  	\begin{tabular}{lcccc}
  		\hline
  		model                     & estimates                               &                                          &                                         &                       \\ \hline
  		$\EGNH(\alpha,\beta,a,b)$ & $1.8\times10^{-3}$                      & $2.83\times10^{-1}$                      & $1.75\times10^{-3}$                     & 47.066                \\
  		& $(8.8\times10^{-4})$                    & $(1.19\times10^{-2})$                    & $(2.47\times10^{-4})$                   & $(3.69)$              \\
  		BNH$(\alpha,\beta,a,b)$   & $2.55\times10^{-1}$                     & $(5.2\times10^{-2})$                     & $5.225\times10^{-3}$                    & 9.753                 \\
  		& $(2.41\times10^{-2})$                   & $(1.93\times10^{-2})$                    & $(5.018\times10^{-4})$                  & $(3.20)$              \\
  		KNH$(\alpha,\beta,a,b)$   & $1.22\times10^{-1}$                     & $6.27\times10^{-2}$                      & $5.85\times10^{-4}$                     & 71.05                 \\
  		& $(3.10\times10^{-2})$                   & $(2.91\times10^{-2})$                    & $(3.98\times10^{-5})$                   & $(1.26\times10^{-1})$ \\
  		MNH$(\alpha,a,b)$         & $2.78\times10^{-6}$                     & $2.01\times10^{-4}$                      & 30.30                                   &                       \\
  		& $(3.98\times10^{-7})$                   & $(5.21\times10^{-5})$                    & $(8.61\times10^{-1})$                   &                       \\
  		MONH$(\alpha,a,b)$        & 4.36                                    & $4.61\times10^{-5}$                      & 370.406                                 &                       \\
  		& $(4.78\times10^{-1})$                   & $(4.92\times10^{-6})$                    & $(60.91)$                               &                       \\
  		TNH$(\alpha,a,b)$         & $(-4.03\times10^{-1})$                  & $4.43\times10^{-6}$                      & 3080.45                                 &                       \\
  		& $(1.01\times10^{-1})$                   & $(3.50\times10^{-6})$                    & $(112.90)$                              &                       \\
  		ENH$(\alpha,a,b)$         & $8.29\times10^{-1}$                     & $2.32\times10^{-5}$                      & 509.957                                 &                       \\
  		& $(7.03\times10^{-2})$                   & $(2.06\times10^{-6})$                    & (60.71)                                 &                       \\
  		ZBNH$(\alpha,a,b)$        & $8.91\times10^{-1}$                     & $3.41\times10^{-6}$                      & 3437.14                                 &                       \\
  		& $(7.55\times10^{-2})$                   & $(1.54\times10^{-7})$                    & (106.40)                                &                       \\
  		GPW$(a,b,c)$              & 395.387                                 & 1.013                                    & $(2.99\times10^{-5})$                   &                       \\
  		& (49.66)                                 & $(8.23\times10^{-2})$                    & $(1.31\times10^{-5})$                   &                       \\
  		NH$(a,b)$                 & $1.6\times10^{-5}$                      & 775.132                                  &                                         &                       \\
  		& $(2.10\times10^{-6})$                   & (97.86)                                  &                                         &                       \\
  		EE$(\alpha,\beta)$        & $8.85\times10^{-1}$                     & $1.70\times10^{-2}$                      &                                         &                       \\
  		& $(8.74\times10^{-2})$                   & $(9.53\times10^{-4})$                    &                                         &                       \\
  		Weibull$(a,b)$            & 1.13                                    & 56.08                                    &                                         &                       \\
  		& $(9.29\times10^{-2})$                   & (3.25)                                   &                                         &                       \\
  		\hline
  	\end{tabular}
  	\label{table:EstimativesApplication1}
  \end{table}
  
  \begin{table}[htb!]
  	\centering
  	\caption{Estimates for \citeauthor{AndrewsHerzberg-StressRuptureLife-1985}'s dataset}
  	\begin{tabular}{lcccc}
  		\hline
  		model                      & estimates                               &                                          &                                           &                       \\ \hline
  		$\EGNH(\alpha,\beta,a,b)$  & $2.41\times10^{-1}$                     & 1.194                                    & $1.27\times10^{-5}$                       & 14.268                \\
  		& $(1.12\times10^{-1})$                   & ($1.53\times10^{-1}$)                    & $(1.67\times10^{-6})$                     & ($3.94\times10^{-1}$) \\
  		BNH$(\alpha,\beta,a,b)$    & 1.189                                   & $1.16\times10^{-1}$                      & $2.48\times10^{-5}$                       & 6.886                 \\
  		& $(1.22\times10^{-1})$                   & $(6.14\times10^{-2})$                    & $(3.64\times10^{-6})$                     & $(3.94\times10^{-1})$ \\
  		KNH$(\alpha,\beta,a,b)$    & 2.202                                   & 9.559                                    & $6.629\times10^{-6}$                      & 5.903                 \\
  		& $(1.71\times10^{-1})$                   & $(3.44\times10^{-1})$                    & $(4.76\times10^{-7})$                     & $(3.89\times10^{-1})$ \\
  		MNH$(\alpha,a,b)$          & $6.08\times10^{-7}$                     & $2.55\times10^{-4}$                      & 0.157                                     &                       \\
  		& $(2.73\times10^{-8})$                   & $(5.67\times10^{-5})$                    & $(1.22\times10^{-4})$                     &                       \\
  		MONH$(\alpha,a,b)$         & 4.65                                    & $6.24\times10^{-6}$                      & 19.46                                     &                       \\
  		& $(6.85\times10^{-1})$                   & $(6.30\times10^{-7})$                    & $(1.59)$                                  &                       \\
  		ZBNH$(\alpha,a,b)$         & 1.81                                    & $2\times10^{-6}$                         & 57.17                                     &                       \\
  		& $(1.1\times10^{-1})$                    & $(2.93\times10^{-7})$                    & (1.81)                                    &                       \\
  		GPW$(a,b,c)$               & 3.57                                    & 1.37                                     & $7.95\times10^{-7}$                       &                       \\
  		& $(1.916\times10^{-14})$                 & $(6.13\times10^{-13})$                   & $(4.07\times10^{-8})$                     &                       \\
  		ENH$(\alpha,a,b)$          & 1.992                                   & $5.18\times10^{-6}$                      & 20.139                                    &                       \\
  		& $(1.45\times10^{-1})$                   & $(1.74\times10^{-7})$                    & $(4.57\times10^{-2})$                     &                       \\
  		NH$(a,b)$                  & $2.95\times10^{-6}$                     & 28.51                                    &                                           &                       \\
  		& $(1.51\times10^{-7})$                   & (1.01)                                   &                                           &                       \\
  		EE$(\alpha,\beta)$         & 3.02                                    & $2.16\times10^{-4}$                      &                                           &                       \\
  		& $(3.06\times10^{-1})$                   & $(1.02\times10^{-5})$                    &                                           &                       \\
  		Weibull$(a,b)$             & 2.16                                    & 9411.52                                  &                                           &                       \\
  		& $(1.19\times10^{-1})$                   & (341.45)                                 &                                           &                       \\ \hline
  	\end{tabular}
  	\label{table:EstimativesApplication2}
  \end{table}

  Based on the figures of merit in Table \ref{table:GoodnessOfFitApplication1}, the proposed EGNH distribution presents smallest GoF values. 
  Figure \ref{fig:Application1Densities} displays fitted and empirical densities for this dataset.
  We consider only EGNH, KNH and BNH models because these models present the smallest values of the KS statistic.
  The EGNH and KNH distributions seems to provide a closer fit to the histogram.
  The empirical cdf and the fitted cdfs of these models are displayed in Figure \ref{fig:Application1ecdf}.
  From this plot, we note that the EGNH model provides a good fit.

  On the other hand, with respect to Table \ref{table:GoodnessOfFitApplication2}, notice that the proposed EGNH model presents smallest values for the W*, A* and KS statistics.
  The fitted EGNH, MONH and Weibull pdfs in Figure \ref{fig:Application2Densities} reveal that the proposed model fits current data better than others.
  The use of AIC, BIC, CAIC and HQIC is only recommended to compare nested models, then it is not adequate to compare the Weibull and MONH models with the EGNH using these criteria.
  In Figure \ref{fig:Application2ecdf}, we can see that the empirical and fitted cdfs, which confirm that the EGNH model is one of the best candidates to describe the second dataset.
  In summary, the EGNH model may be an interesting alternative to other models available in the literature for modelling positive real data.

  \begin{table}[htb!]
    \centering
    \caption{Goodness-of-fit tests for \citeauthor{Aarset-HowtoIdentify-1987}'s dataset}
    \begin{tabular}{lrrrrrrr}
      \hline
      model                     & W*             & A*             & KS             & AIC             & CAIC            & BIC             & HQIC \\ \hline
      $\EGNH(\alpha,\beta,a,b)$ & \textbf{0.191} & \textbf{1.381} & 0.141          & \textbf{454.73} & \textbf{455.62} & \textbf{462.38} & \textbf{457.64} \\
      BNH$(\alpha,\beta,a,b)$   & 0.198          & 1.409          & 0.131          & 457.56          & 458.45          & 465.21          & 460.47 \\
      KNH$(\alpha,\beta,a,b)$   & 0.209          & 1.479          & \textbf{0.129} & 459.44          & 460.34          & 467.09          & 462.36 \\
      MNH$(\alpha,a,b)$         & 0.286          & 1.877          & 0.164          & 475.73          & 476.25          & 481.47          & 477.92 \\
      MONH$(\alpha,a,b)$        & 0.314          & 2.028          & 0.160          & 477.64          & 478.17          & 483.38          & 479.83 \\
      TNH$(\alpha,a,b)$         & 0.333          & 2.132          & 0.173          & 477.89          & 478.42          & 483.63          & 480.08 \\
      ENH$(\alpha,a,b)$         & 0.349          & 2.219          & 0.204          & 472.36          & 472.88          & 478.09          & 474.55 \\
      ZBNH$(\alpha,a,b)$        & 0.359          & 2.279          & 0.199          & 473.61          & 474.13          & 479.34          & 475.79 \\
      GPW$(a,b,c)$              & 0.372          & 2.341          & 0.194          & 475.89          & 476.42          & 481.63          & 478.07 \\
      NH$(a,b)$                 & 0.342          & 2.181          & 0.178          & 476.02          & 476.27          & 479.84          & 477.47 \\
      EE$(\alpha,\beta)$        & 0.485          & 2.950          & 0.204          & 483.99          & 484.25          & 487.81          & 485.45 \\
      Weibull$(a,b)$            & 0.496          & 3.008          & 0.193          & 486.00          & 486.26          & 489.83          & 487.46 \\ \hline
    \end{tabular}
    \label{table:GoodnessOfFitApplication1}
  \end{table}

  \begin{table}[htb!]
    \centering
    \caption{Goodness-of-fit tests for \citeauthor{AndrewsHerzberg-StressRuptureLife-1985}'s dataset}
    \begin{tabular}{lrrrrrrr}
      \hline
      model                     & W*             & A*             & KS             & AIC             & CAIC            & BIC             & HQIC \\ \hline
      EGNH$(\alpha,\beta,a,b)$ & \textbf{0.032} & \textbf{0.236} & \textbf{0.069} & 966.81          & 967.72          & 974.38          & 969.68 \\
      MONH$(\alpha,a,b)$        & 0.033          & 0.246          & 0.072          & \textbf{965.42} & \textbf{965.95} & 971.09          & 967.57 \\
      BNH$(\alpha,\beta,a,b)$   & 0.033          & 0.237          & 0.070          & 966.95          & 967.86          & 974.51          & 969.82 \\
      ZBNH$(\alpha,a,b)$        & 0.054          & 0.323          & 0.079          & 965.49          & 966.02          & 971.17          & 967.64 \\
      GPW$(a,b,c)$              & 0.057          & 0.361          & 0.191          & 968.59          & 969.12          & 974.26          & 970.74 \\
      ENH$(\alpha,a,b)$         & 0.058          & 0.383          & 0.084          & 966.26          & 966.80          & 971.94          & 968.42 \\
      NH$(a,b)$                 & 0.059          & 0.388          & 0.186          & 973.48          & 973.75          & 977.27          & 974.92 \\
      Weibull$(a,b)$            & 0.073          & 0.482          & 0.088          & 965.70          & 965.95          & \textbf{969.48} & \textbf{967.13} \\
      KNH$(\alpha,\beta,a,b)$   & 0.075          & 0.496          & 0.089          & 969.79          & 970.70          & 977.36          & 972.66 \\
      EE$(\alpha,\beta)$        & 0.164          & 1.061          & 0.118          & 972.14          & 972.40          & 975.92          & 973.58 \\
      MNH$(\alpha,a,b)$         & 0.342          & 2.137          & 0.277          & 1021.80         & 1022.34         & 1027.48         & 1023.96 \\
      \hline
    \end{tabular}
    \label{table:GoodnessOfFitApplication2}
  \end{table}

  \begin{figure*}[htb!] 
    \centering
    \subfigure[\citeauthor{Aarset-HowtoIdentify-1987}]{
      \includegraphics[width=0.45\textwidth]{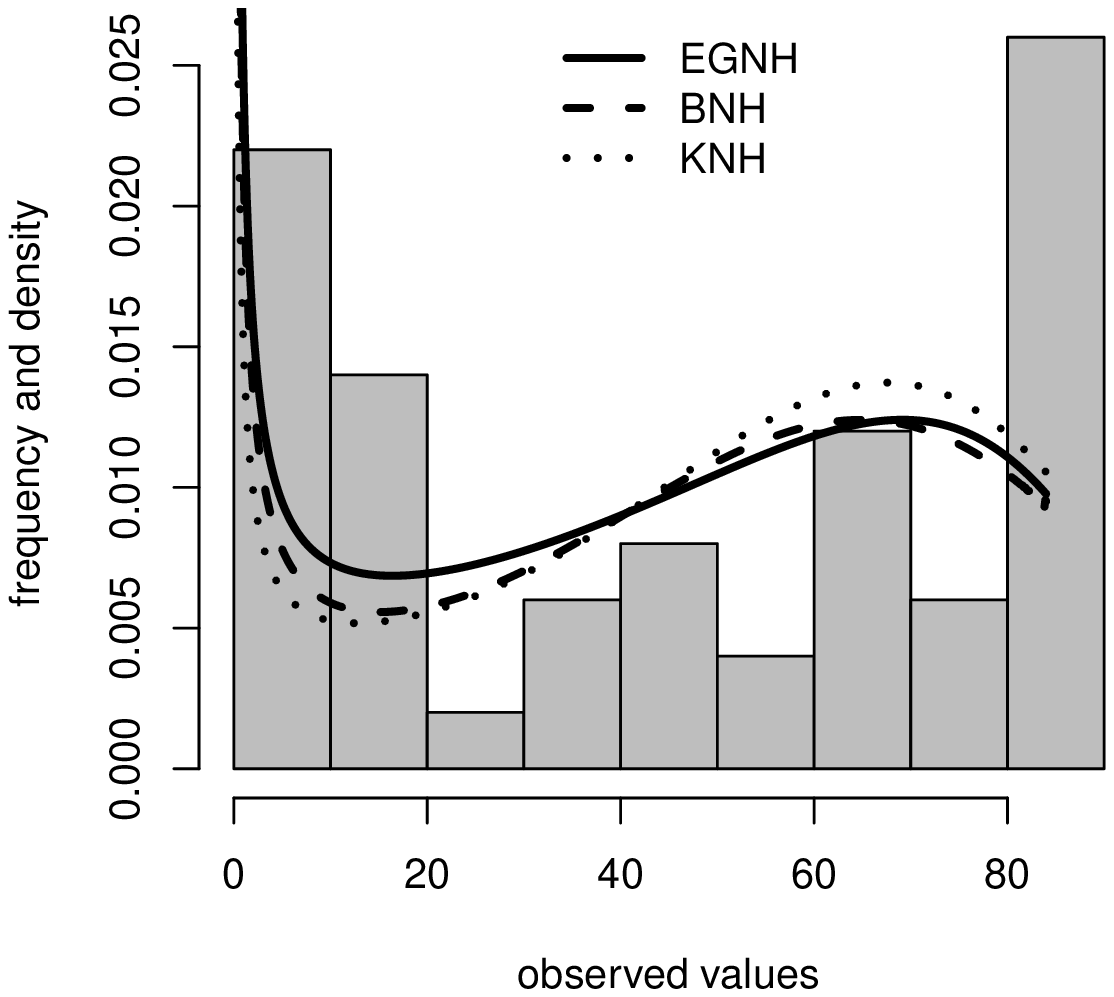}
      \label{fig:Application1Densities}
    }
    \subfigure[\citeauthor{AndrewsHerzberg-StressRuptureLife-1985}]{
      \includegraphics[width=0.45\textwidth]{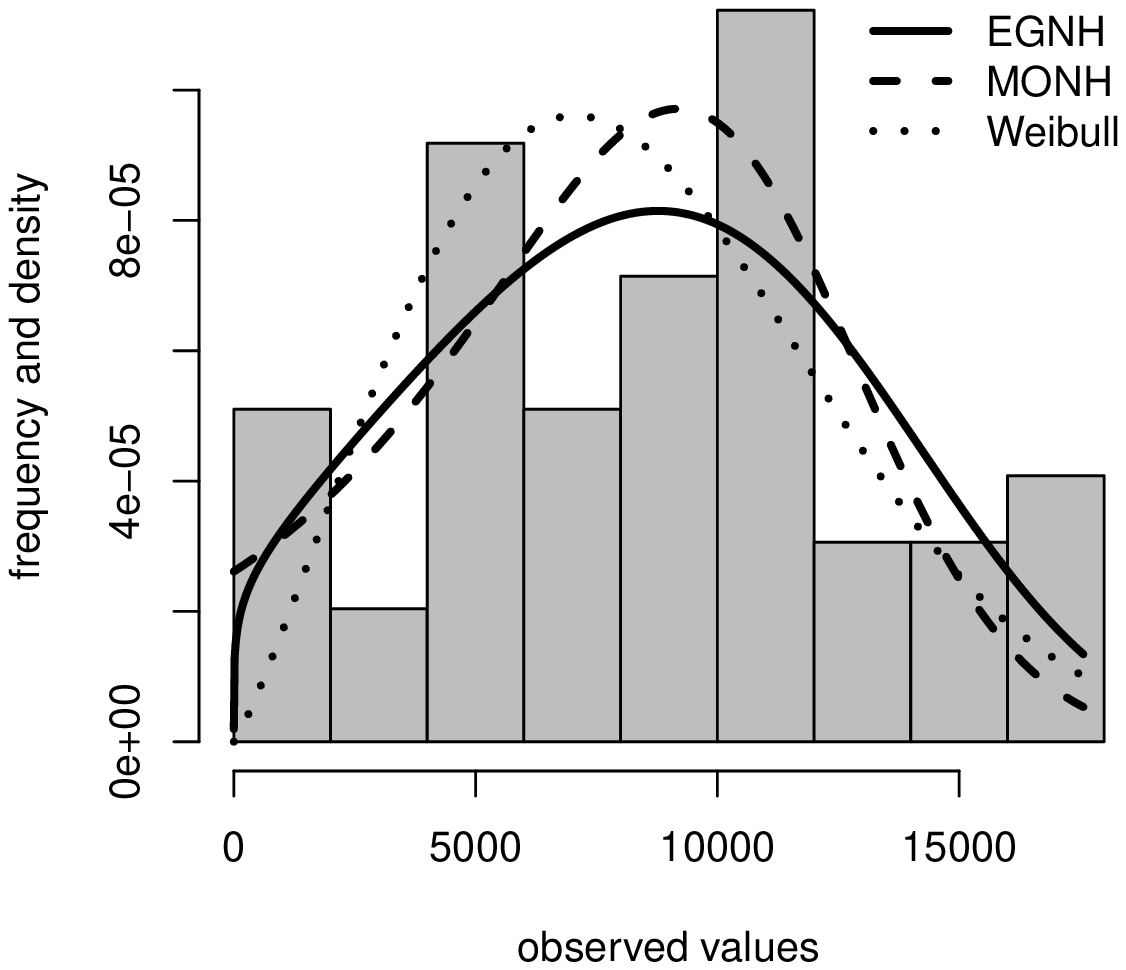}
      \label{fig:Application2Densities}
    }
    \caption{Empirical and fitted pdfs}
  \end{figure*}

  \begin{figure*}[htb!] 
    \centering
    \subfigure[\citeauthor{Aarset-HowtoIdentify-1987}]{
      \includegraphics[width=0.45\textwidth]{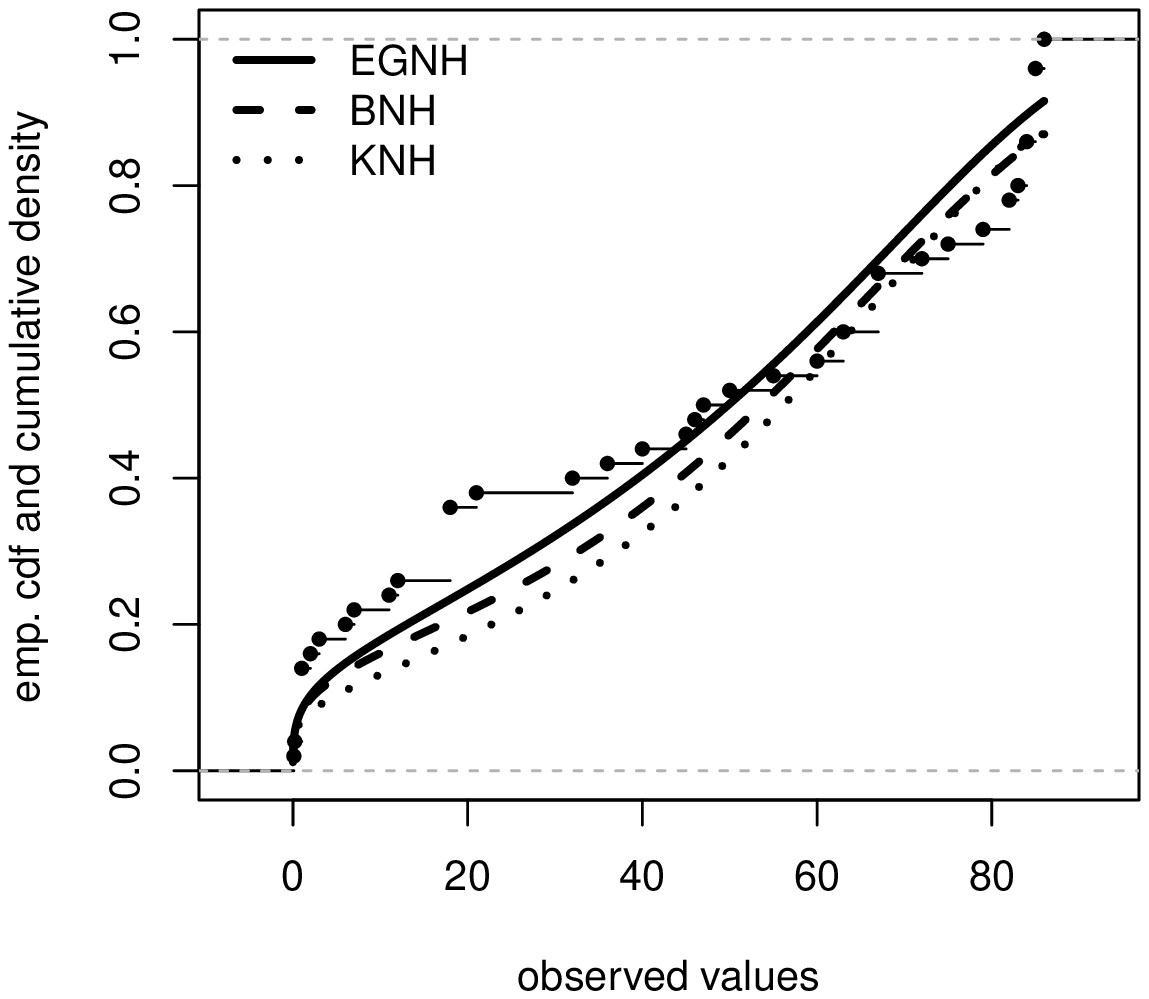}
      \label{fig:Application1ecdf}
    }
    \subfigure[\citeauthor{AndrewsHerzberg-StressRuptureLife-1985}]{
      \includegraphics[width=0.45\textwidth]{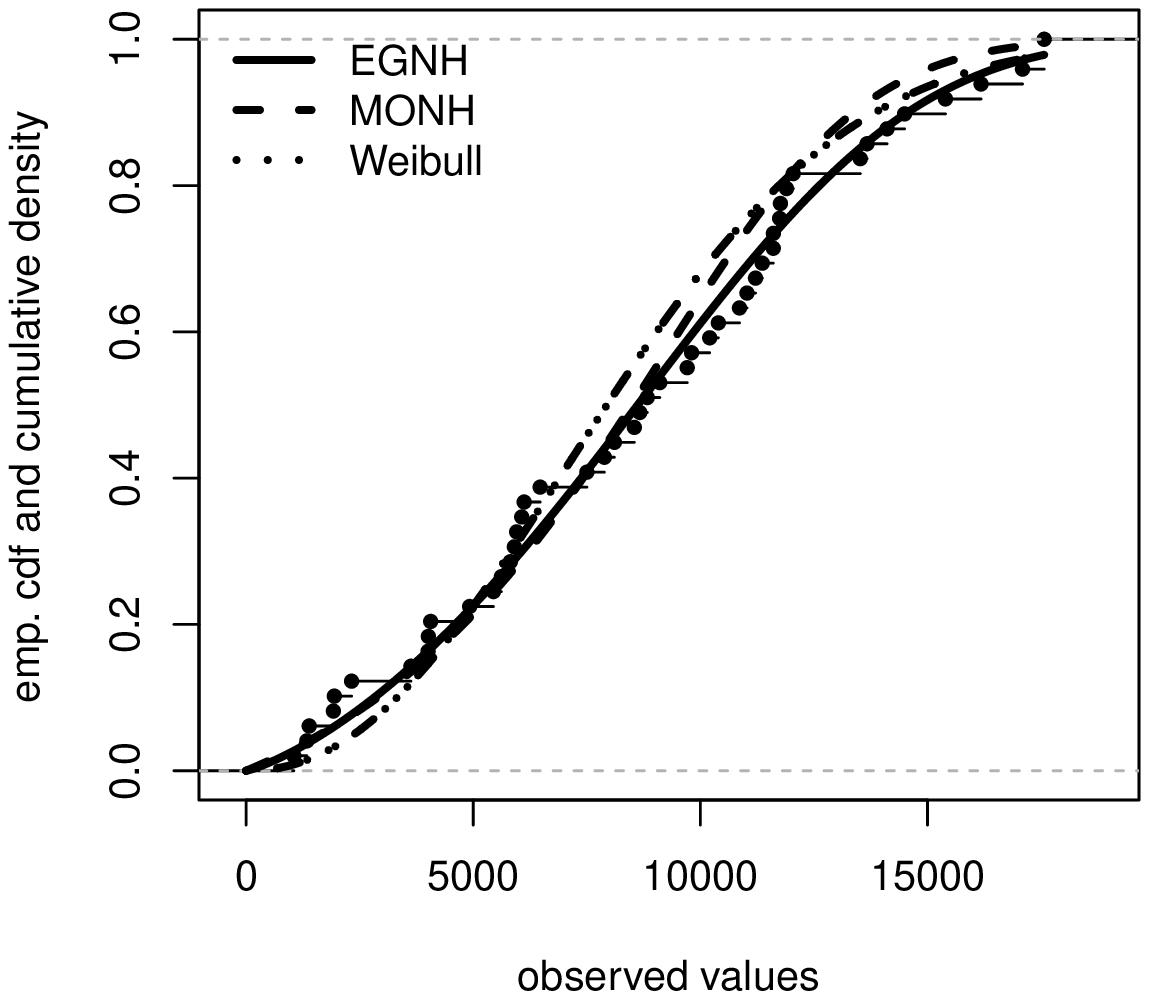}
      \label{fig:Application2ecdf}
    }
    \caption{Empirical and fitted pdfs}
  \end{figure*}

  \section{Concluding remarks}\label{sec:Concludingremarks}

  In this paper, we propose a new distribution, which is a simple generalization of the Nadarajah-Haghighi's ex\-po\-nen\-tial proposed by \citet{NadarajahHaghighi-extensionexponentialdistribution-2011}.
  Further, the new model includes others known distributions as special cases.
  We refer to it as the \emph{exponentiated generalized Nararajah-Haghighi} (EGNH) distribution.
  Some of its properties have been derived and discussed: ordinary and incomplete moments, mean deviations, Rényi entropy and order statistics.
  We present theoretical evidence that the new distribution can take decreasing, increasing, bathtub-shaped and upside-down bathtub hazard rate functions.
  We provide a maximum likelihood procedure for estimating the EGNH parameters.
  A Monte Carlo study to assess the consistency of the maximum likelihood estimates is performed, which satisfies the expected asymptotic behavior.
  The usefulness of the new model is illustrated through two applications to real data.
  The results indicate that our proposal can furnish better performance than other extended Nadarajah-Haghighi models recorded in the survival literature.
  The formulae related with the new model are manageable and may turn into adequate tools comprising the arsenal of applied statistics.
  We hope that the proposed model may attract wider applications for modeling positive real data in many areas such as engineering, survival analysis, hydrology and economics.

  \bibliographystyle{myapa}
  \bibliography{ArtigoEGNH-chjs.bib}

\begin{thebibliography}{}

\bibitem[\protect\citeauthoryear{{A}arset}{{A}arset}{1987}]{Aarset-HowtoIdentify-1987}
{A}arset, M.~V. 1987.
\newblock {H}ow to {I}dentify a {B}athtub {H}azard {R}ate.
\newblock {\em {R}eliability, {IEEE} {T}ransactions on}, {\em R-36\/},
  106--108.

\bibitem[\protect\citeauthoryear{{A}bdul {M}oniem}{{A}bdul
  {M}oniem}{2015}]{Abdul-Moniem-ExponentiatedNadarajahand-2015}
{A}bdul {M}oniem, I.~B. 2015.
\newblock {E}xponentiated {N}adarajah and {H}aghighi’s exponential
  {D}istribution.
\newblock {\em {I}nternational {J}ournal of {M}athematical {A}nalysis and
  {A}pplications}, {\em 2\/}.

\bibitem[\protect\citeauthoryear{{A}hmed, {M}uhammed \& {E}lbatal}{{A}hmed
  et~al.}{2015}]{AhmedMuhammedElbatal-NewClassExtension-2015}
{A}hmed, A., {M}uhammed, H.~Z., and {E}lbatal, I. 2015.
\newblock {A} {N}ew {C}lass of {E}xtension {E}xponential {D}istribution.
\newblock {\em {I}nternational {J}ournal of {A}pplied {M}athematical
  {S}ciences}, {\em 8\/}, 13--30.

\bibitem[\protect\citeauthoryear{{A}lzaatreh, {L}ee \& {F}amoye}{{A}lzaatreh
  et~al.}{2013}]{AlzaatrehLeeFamoye-newmethodgenerating-2013}
{A}lzaatreh, A., {L}ee, C., and {F}amoye, F. 2013.
\newblock {A} new method for generating families of continuous distributions.
\newblock {\em {METRON} - {I}nternational {J}ournal of statistics}, {\em 71\/},
  63--79.

\bibitem[\protect\citeauthoryear{{A}ndrade, {R}odrigues, {B}ourguignon \&
  {C}ordeiro}{{A}ndrade
  et~al.}{2015}]{AndradeRodriguesBourguignonCordeiro-exponentiatedgeneralizedGumbel-2015}
{A}ndrade, T., {R}odrigues, H.~M., {B}ourguignon, M., and {C}ordeiro, G.~M.
  2015.
\newblock {T}he exponentiated generalized {G}umbel distribution.
\newblock {\em {R}evista {C}olombiana de {E}stad\'istica}, {\em 38}, 123--143.

\bibitem[\protect\citeauthoryear{{A}ndrews \& {H}erzberg}{{A}ndrews and
  {H}erzberg}{1985}]{AndrewsHerzberg-StressRuptureLife-1985}
{A}ndrews, D.~F. and {H}erzberg, A.~M. 1985.
\newblock {\em {S}tress-{R}upture {L}ife of {K}evlar 49/{E}poxy {S}pherical
  {P}ressure {V}essels}, (pp.\ 181--186).
\newblock New York, NY: Springer New York.

\bibitem[\protect\citeauthoryear{{B}ourguignon, {L}ima, {L}eão, {N}ascimento,
  {P}inho \& {C}ordeiro}{{B}ourguignon
  et~al.}{2015}]{BourguignonCarmoLeaoNascimentoPinhoCordeiro-newgeneralizedgamma-2015}
{B}ourguignon, M., {L}ima, M. C.~S., {L}eão, J., {N}ascimento, A. D.~C.,
  {P}inho, L. G.~B., and {C}ordeiro, G.~M. 2015.
\newblock {A} new generalized gamma distribution with applications.
\newblock {\em {A}merican {J}ournal of {M}athematical and {M}anagement
  {S}ciences}, {\em 34\/}, 309--342.

\bibitem[\protect\citeauthoryear{{B}ourguignon, {S}ilva \&
  {C}ordeiro}{{B}ourguignon
  et~al.}{2014}]{BourguignonSilvaCordeiro-WeibullGFamily-2014}
{B}ourguignon, M., {S}ilva, R.~B., and {C}ordeiro, G.~M. 2014.
\newblock {T}he {W}eibull-{G} {F}amily of {P}robability {D}istributions.
\newblock {\em {J}ournal of {D}ata {S}cience}, {\em 12}, 53--68.

\bibitem[\protect\citeauthoryear{{C}ooray \& {A}nanda}{{C}ooray and
  {A}nanda}{2008}]{CoorayAnanda-GeneralizationHalfNormal-2008}
{C}ooray, K. and {A}nanda, M. M.~A. 2008.
\newblock {A} {G}eneralization of the {H}alf-{N}ormal {D}istribution with
  {A}pplications to {L}ifetime {D}ata.
\newblock {\em {C}ommunications in {S}tatistics - {T}heory and {M}ethods}, {\em
  37\/}, 1323--1337.

\bibitem[\protect\citeauthoryear{{C}ordeiro \& {C}astro}{{C}ordeiro and
  {C}astro}{2011}]{CordeiroCastro-newfamilygeneralized-2011}
{C}ordeiro, G.~M. and {C}astro, M. 2011.
\newblock {A} new family of generalized distributions.
\newblock {\em {J}ournal of {S}tatistical {C}omputation and {S}imulation}, {\em
  81\/}, 883--898.

\bibitem[\protect\citeauthoryear{{C}ordeiro, {O}rtega \& {C}unha}{{C}ordeiro
  et~al.}{2013}]{CordeiroOrtegaCunha-ExponentiatedGeneralizedClass-2013}
{C}ordeiro, G.~M., {O}rtega, E. M.~M., and {C}unha, D. C.~C. 2013.
\newblock {T}he {E}xponentiated {G}eneralized {C}lass of {D}istributions.
\newblock {\em {J}ournal of {D}ata {S}cience}, {\em 1\/}, 1--27.

\bibitem[\protect\citeauthoryear{{C}ordeiro, {O}rtega \& {S}ilva}{{C}ordeiro
  et~al.}{2011}]{CordeiroOrtegaSilva-exponentiatedgeneralizedgamma-2011}
{C}ordeiro, G.~M., {O}rtega, E. M.~M., and {S}ilva, G.~O. 2011.
\newblock {T}he exponentiated generalized gamma distribution with application
  to lifetime data.
\newblock {\em {J}ournal of {S}tatistical {C}omputation and {S}imulation}, {\em
  81\/}, 827--842.

\bibitem[\protect\citeauthoryear{{D}ias}{{D}ias}{2016}]{Dias-NewContinuousDistributionsAppliedToLifetimeDataAndSurvivalAnalysis-UniversidadeFederaldePernambuco-2016}
{D}ias, C. R.~B. 2016.
\newblock {\em {N}ew {C}ontinuous {D}istributions {A}pplied {T}o {L}ifetime
  {D}ata {A}nd {S}urvival {A}nalysis}.
\newblock Doctoral thesis, Universidade Federal de Pernambuco, Recife.

\bibitem[\protect\citeauthoryear{{E}l {D}amcese \& {R}amadan}{{E}l {D}amcese
  and {R}amadan}{2015}]{El-DamceseRamadan-StudiesPropertiesand-2015}
{E}l {D}amcese, M.~A. and {R}amadan, D.~A. 2015.
\newblock {S}tudies on {P}roperties and {E}stimation {P}roblems for {M}odified
  {E}xtension of {E}xponential {D}istribution.
\newblock {\em {I}nternational {J}ournal of {C}omputer {A}pplications}, {\em
  125\/}.

\bibitem[\protect\citeauthoryear{{E}lbatal \& {M}uhammed}{{E}lbatal and
  {M}uhammed}{2014}]{ElbatalMuhammed-ExponentiatedGeneralizedInverse-2014}
{E}lbatal, I. and {M}uhammed, H.~Z. 2014.
\newblock {E}xponentiated {G}eneralized {I}nverse {W}eibull {D}istribution.
\newblock {\em {A}pplied {M}athematical {S}ciences}, {\em 8}, 3997--4012.

\bibitem[\protect\citeauthoryear{{E}ugene, {L}ee \& {F}amoye}{{E}ugene
  et~al.}{2002}]{EugeneLeeFamoye-BetanormalDistribution-2002}
{E}ugene, N., {L}ee, C., and {F}amoye, F. 2002.
\newblock {B}eta-normal {D}istribution {A}nd {I}ts {A}pplications.
\newblock {\em {C}ommunications in {S}tatistics - {T}heory and {M}ethods}, {\em
  31\/}, 497--512.

\bibitem[\protect\citeauthoryear{{G}upta, {G}upta \& {G}upta}{{G}upta
  et~al.}{1998}]{GuptaGuptaGupta-Modelingfailuretime-1998}
{G}upta, R.~C., {G}upta, P.~L., and {G}upta, R.~D. 1998.
\newblock {M}odeling failure time data by lehman alternatives.
\newblock {\em {C}ommunications in {S}tatistics - {T}heory and {M}ethods}, {\em
  27\/}, 887--904.

\bibitem[\protect\citeauthoryear{{G}upta \& {K}undu}{{G}upta and
  {K}undu}{2001}]{GuptaKundu-ExponentiatedExponentialFamily-2001}
{G}upta, R.~D. and {K}undu, D. 2001.
\newblock {E}xponentiated {E}xponential {F}amily.
\newblock {\em {B}iometrical {J}ournal}, {\em 43\/}, 117--130.

\bibitem[\protect\citeauthoryear{{H}enningsen \& {T}oomet}{{H}enningsen and
  {T}oomet}{2011}]{HenningsenToomet-maxLik:packagemaximum-2011}
{H}enningsen, A. and {T}oomet, O. 2011.
\newblock max{L}ik: {A} package for maximum likelihood estimation in {R}.
\newblock {\em {C}omputational {S}tatistics}, {\em 26\/}, 443--458.

\bibitem[\protect\citeauthoryear{{K}enney \& {K}eeping}{{K}enney and
  {K}eeping}{1962}]{KenneyKeeping-Mathematicsofstatistics-1962}
{K}enney, J.~F. and {K}eeping, E.~S. 1962.
\newblock {\em {M}athematics of statistics\/} (3 ed.)., volume~1.
\newblock Van Nostrand, New Jersey: Chapman \& Hall Ltd.

\bibitem[\protect\citeauthoryear{{L}ee, {F}amoye \& {A}lzaatreh}{{L}ee
  et~al.}{2013}]{LeeFamoyeAlzaatreh-Methodsgeneratingfamilies-2013}
{L}ee, C., {F}amoye, F., and {A}lzaatreh, A.~Y. 2013.
\newblock {M}ethods for generating families of univariate continuous
  distributions in the recent decades.
\newblock {\em {W}iley {I}nterdisciplinary {R}eviews}, {\em 5\/}, 219--238.

\bibitem[\protect\citeauthoryear{{L}emonte}{{L}emonte}{2013}]{Lemonte-newexponentialtype-2013}
{L}emonte, A.~J. 2013.
\newblock {A} new exponential-type distribution with constant, decreasing,
  increasing, upside-down bathtub and bathtub-shaped failure rate function.
\newblock {\em {C}omputational {S}tatistics \& {D}ata {A}nalysis}, {\em 62},
  149--170.

\bibitem[\protect\citeauthoryear{{L}emonte \& {C}ordeiro}{{L}emonte and
  {C}ordeiro}{2011}]{LemonteCordeiro-exponentiatedgeneralizedinverse-2011}
{L}emonte, A.~J. and {C}ordeiro, G.~M. 2011.
\newblock {T}he exponentiated generalized inverse {G}aussian distribution.
\newblock {\em {S}tatistics \& {P}robability {L}etters}, {\em 81\/}, 506--517.

\bibitem[\protect\citeauthoryear{{L}emonte, {C}ordeiro \& {M}oreno
  {A}renas}{{L}emonte
  et~al.}{2016}]{LemonteCordeiroMoreno-Arenas-newusefulthree-2016}
{L}emonte, A.~J., {C}ordeiro, G.~M., and {M}oreno {A}renas, G. 2016.
\newblock {A} new useful three-parameter extension of the exponential
  distribution.
\newblock {\em {S}tatistics}, {\em 50\/}, 312--337.

\bibitem[\protect\citeauthoryear{{L}ima}{{L}ima}{2015}]{Lima-TheHalf-normalGeneralizedFamilyAndKumaraswamyNadarajah-haghighiDistribution-2015}
{L}ima, S. R.~L. 2015.
\newblock {T}he {H}alf-{N}ormal {G}eneralized {F}amily {A}nd {K}umaraswamy
  {N}adarajah-{H}aghighi {D}istribution.
\newblock Master thesis, Universidade Federal de Pernambuco.

\bibitem[\protect\citeauthoryear{{M}oors}{{M}oors}{1988}]{Moors-QuantileAlternativeKurtosis-1988}
{M}oors, J. J.~A. 1988.
\newblock {A} {Q}uantile {A}lternative for {K}urtosis.
\newblock {\em {J}ournal of the {R}oyal {S}tatistical {S}ociety. {S}eries {D}
  ({T}he {S}tatistician)}, {\em 37\/}, 25--32.

\bibitem[\protect\citeauthoryear{{M}udholkar \& {S}rivastava}{{M}udholkar and
  {S}rivastava}{1993}]{MudholkarSrivastava-ExponentiatedWeibullfamily-1993}
{M}udholkar, G.~S. and {S}rivastava, D.~K. 1993.
\newblock {E}xponentiated {W}eibull family for analyzing bathtub failure-rate
  data.
\newblock {\em {R}eliability, {IEEE} {T}ransactions on}, {\em 42\/}, 299--302.

\bibitem[\protect\citeauthoryear{{M}udholkar, {S}rivastava \&
  {F}reimer}{{M}udholkar
  et~al.}{1995}]{MudholkarSrivastavaFreimer-ExponentiatedWeibullFamily-1995}
{M}udholkar, G.~S., {S}rivastava, D.~K., and {F}reimer, M. 1995.
\newblock {T}he {E}xponentiated {W}eibull {F}amily: {A} {R}eanalysis of the
  {B}us-{M}otor-{F}ailure {D}ata.
\newblock {\em {T}echnometrics}, {\em 37\/}, 436--445.

\bibitem[\protect\citeauthoryear{{N}adarajah \& {G}upta}{{N}adarajah and
  {G}upta}{2007}]{NadarajahGupta-productParetodistribution-2007}
{N}adarajah, S. and {G}upta, A.~K. 2007.
\newblock {A} product {P}areto distribution.
\newblock {\em {M}etrika}, {\em 68\/}, 199--208.

\bibitem[\protect\citeauthoryear{{N}adarajah \& {H}aghighi}{{N}adarajah and
  {H}aghighi}{2011}]{NadarajahHaghighi-extensionexponentialdistribution-2011}
{N}adarajah, S. and {H}aghighi, F. 2011.
\newblock {A}n extension of the exponential distribution.
\newblock {\em {S}tatistics}, {\em 45\/}, 543--558.

\bibitem[\protect\citeauthoryear{{N}ikulin \& {H}aghighi}{{N}ikulin and
  {H}aghighi}{2006}]{NikulinHaghighi-ChiSquaredTest-2006}
{N}ikulin, M. and {H}aghighi, F. 2006.
\newblock {A} {C}hi-{S}quared {T}est for the {G}eneralized {P}ower {W}eibull
  {F}amily for the {H}ead-and-{N}eck {C}ancer {C}ensored {D}ata.
\newblock {\em {J}ournal of {M}athematical {S}ciences}, {\em 133\/},
  1333--1341.

\bibitem[\protect\citeauthoryear{{N}ikulin \& {H}aghighi}{{N}ikulin and
  {H}aghighi}{2009}]{NikulinHaghighi-powergeneralizedWeibull-2009}
{N}ikulin, M. and {H}aghighi, F. 2009.
\newblock {O}n the power generalized {W}eibull family.
\newblock {\em {METRON} - {I}nternational {J}ournal of statistics}, {\em
  LXVII\/}, 75--86.

\bibitem[\protect\citeauthoryear{{O}rtega, {L}emonte, {S}ilva \&
  {C}ordeiro}{{O}rtega
  et~al.}{2015}]{OrtegaLemonteSilvaCordeiro-Newflexiblemodels-2015}
{O}rtega, E. M.~M., {L}emonte, A.~J., {S}ilva, G.~O., and {C}ordeiro, G.~M.
  2015.
\newblock {N}ew flexible models generated by gamma random variables for
  lifetime modeling.
\newblock {\em {J}ournal of {A}pplied {S}tatistics}, {\em 42\/}, 2159--2179.

\bibitem[\protect\citeauthoryear{{S}elim \& {B}adr}{{S}elim and
  {B}adr}{2016}]{SelimBadr-KumaraswamyGeneralizedPower-2016}
{S}elim, M.~A. and {B}adr, A.~M. 2016.
\newblock {T}he {K}umaraswamy {G}eneralized {P}ower {W}eibull {D}istribution.
\newblock {\em {M}athematical {T}heory and {M}odeling}, {\em 6\/}, 110--124.

\bibitem[\protect\citeauthoryear{{S}ilva, {G}omes {S}ilva, {R}amos \&
  {C}ordeiro}{{S}ilva
  et~al.}{2015}]{SilvaGomes-SilvaRamosCordeiro-NewExtendedGamma-2015}
{S}ilva, R.~V., {G}omes {S}ilva, F., {R}amos, M. W.~A., and {C}ordeiro, G.~M.
  2015.
\newblock {A} {N}ew {E}xtended {G}amma {G}eneralized {M}odel.
\newblock {\em {I}nternational {J}ournal of {P}ure and {A}pplied
  {M}athematics}, {\em 100\/}.

\bibitem[\protect\citeauthoryear{{Z}ografos \& {B}alakrishnan}{{Z}ografos and
  {B}alakrishnan}{2009}]{ZografosBalakrishnan-familiesbetaand-2009}
{Z}ografos, K. and {B}alakrishnan, N. 2009.
\newblock {O}n families of beta- and generalized gamma-generated distributions
  and associated inference.
\newblock {\em {S}tatistical {M}ethodology}, {\em 6\/}, 344--362.

\end{thebibliography}

\end{document}